\documentclass[11pt,a4paper]{article}
\usepackage{amsthm, amsmath, amssymb, titling, fullpage, color, bbm, mathrsfs, tikz}
\usepackage{fancyhdr,cancel,enumerate,array,booktabs,setspace,pgf,tikz,fancyhdr,graphicx,color}
\usepackage{float}
\usepackage{comment}

\usepackage{titling}
\usetikzlibrary{decorations.pathreplacing}
\usepackage{tikz-cd}
\newcommand{\Q}{{\mathbb{Q}}}
\newcommand{\R}{{\mathbb{R}}}
\newcommand{\Z}{{\mathbb{Z}}}
\newcommand{\C}{{\mathbb{C}}}

\newcommand{\PP}{{\mathbb{P}}}
\newcommand{\Proj}{\text{Proj}}
\newcommand{\Spec}{\text{Spec}}
\newcommand{\A}{\mathbb{A}}
\newcommand{\lcm}{\text{lcm}}
\newcommand{\OO}{\mathcal{O}}

\newtheorem*{theorem*}{Main Theorem}
\theoremstyle{plain}
\newtheorem{theorem}{Theorem}[section]
\theoremstyle{definition}
\newtheorem{definition}[theorem]{Definition}

\newtheorem{remark}{Remark}[subsection]
\newtheorem{prop}{Proposition}[section]
\newtheorem{lemma}[theorem]{Lemma}

\setlength{\columnsep}{1cm}

\begin{document}
\date{}
\title{On the finiteness of loci of weighted plane curves in the moduli space}
\author{Monica Marinescu}
\maketitle
\begin{abstract}
For every fixed genus $g\geq 1$, we consider all quadruples $Q=(w_0,w_1,w_2,d)\in\Z^4_{>0}$ with the property that any smooth degree-$d$ curve embedded in the weighted projective plane $\PP^2(w_0,w_1,w_2)$ has genus $g$. We show there are infinitely many quadruples $Q$ satisfying this condition. For every such $Q$, we consider $Z_Q\subseteq M_g$ the locus in the moduli space of all smooth degree-$d$ curves embedded in $\PP^2(w_0,w_1,w_2)$. We show that, as $Q$ varies over all these quadruples, there are only finitely many different loci $Z_Q\subseteq M_g$.

\end{abstract}

\section{Introduction}

\hspace{6mm}Throughout this paper, we will work over $\mathbb{C}$. On $\PP^2$, a smooth curve of degree $d$ has genus $g=\frac{(d-1)(d-2)}{2}$. For a fixed genus $g\geq 1$, there exists at most one possible degree $d$ satisfying the formula. For such a pair $(g,d)$, let $N={{d+3}\choose{3}}-1$. The projective space $\PP^N$ naturally parametrizes all curves of degree $d$ embedded in $\PP^2$. Since smoothness is an open condition, there exists a (nonempty) open set $\mathcal{U}^{sm}\subseteq\PP^N$ parametrizing all smooth plane curves of degree $d$ and genus $g$. Consider the natural map $i:\mathcal{U}^{sm}\to M_g$, and let $Z=\overline{i(\mathcal{U}^{sm})}\subset M_g$ be the unique locus of plane curves in the moduli space $M_g$.

For smooth curves on weighted projective planes, the degree-genus formula no longer applies. In this situation, we have a formula for the genus in terms of the weights and the degree. Thus, fixing $g\geq 1$, we can look at all the quadruples $Q=(w_0,w_1,w_2,d)$ for which any smooth curve on $\PP^2(w_0,w_1,w_2)$ of degree $d$ has genus $g$. The set of smooth, degree-$d$ curves $C_d=V(f)\subset\PP^2(w_0,w_1,w_2)$  is nonempty if certain congruence relations between the weights and the degree are satisfied, as we will see in later sections. Lemma \ref{inf-quadruples} shows that for every $g\geq 1$, there are infinitely many quadruples $Q=(w_0,w_1,w_2,d)$ satisfying these congruence conditions. Any $Q$ with these properties will be called a $g$-good quadruple.

Since smoothness is an open condition, for any $g$-good quadruple $Q=(w_0,w_1,w_2,d)$, we get an open nonempty subset of smooth curves $\mathcal{U}^{sm}_Q\subset\PP V_Q$, where $V_Q$ is the vector space parametrizing all $\underline{w}$-weighted, degree-$d$ plane curves. Since every curve in $\mathcal{U}^{sm}_Q$ has genus $g$, there is a natural map $i:\mathcal{U}^{sm}_Q\to M_g$. Let $Z_Q=\overline{i(\mathcal{U}_Q)}$ be the corresponding irreducible closed locus in the moduli space. Our main statement is the following:

\begin{theorem*}
\normalfont
For any fix $g\geq 1$, there are only finitely many irreducible closed loci $Z_Q\subset M_g$, as $Q$ varies over all $g$-good quadruples.
\end{theorem*}

The strategy is as follows: first, we state the conditions needed for a general degree $d$-curve on $\PP^2(w_0,w_1,w_2)$ to be smooth, and give a genus formula for that case. We fix a genus $g\geq 1$, and let $Q=(w_0,w_1,w_2,d)$ vary over all $g$-good quadruples. For every such $Q$, we associate a polytope $P$ in the plane $H: xw_0+yw_1+zw_2=d\subset \R^3$, such that $P$ has exactly $g$ interior lattice points (i.e. points in $\Z^3$). We then use a linear transformation to obtain a polytope in $\Z^2$ with exactly $g$ interior lattice points. We prove there are only finitely many such polytopes in $\Z^2$, up to affine equivalence; from this, we conclude there are only finitely many irreducible closed loci $Z_Q\subset M_g$, as $Q$ varies over all $g$-good quadruples.

\section{Preliminaries}

\quad The results presented in this section can be found in  \cite{dolgachev} and \cite{fletcher}. The proofs, along with more detailed information, are contained in the source materials.

Let $W=\{w_0,\dots,w_n\}$ be a finite set of positive integers and $|W|=w_0+\dots+w_n$. Let $\C(W)$ be the graded polynomial algebra $\C[x_0,\dots,x_n]$ over the field $\C$, with grading given by $\deg(x_i)=w_i$, for $i=0,\dots, n$. We say  $\PP^n(W)= \Proj(\C(W))$ is a weighted projective space of type $W$.

Let $U=\A^{n+1}-\{0\}=\Spec(\C(W))-\{\mathfrak{m}\}$,  where $\mathfrak{m}=(x_0,\dots,x_n)$. The grading on $\C(W)$ is equivalent to an action of the algebraic torus $G_m=\Proj(\C[t,t^{-1}])$ on $\Spec(\C(W))$, which on points is given by: 
\begin{align*}
\C^*\times \C^{n+1}& \to \C^{n+1}\\
(t,(a_0,\dots,a_n))& \to(a_0t^{w_0},\dots,a_nt^{w_n}).
\end{align*}

\begin{lemma}\cite[1.2.1]{dolgachev}
The open set $U=\A^{n+1}-\{0\}$ is invariant with respect to this action and the universal geometric quotient $U/G_m$ exists and coincides with $\PP^n(W)$.
\end{lemma}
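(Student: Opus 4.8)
The plan is to prove the two assertions separately: first that $U=\A^{n+1}-\{0\}$ is $G_m$-invariant, which is immediate, and then that the quotient $U/G_m$ exists as a universal geometric quotient and is canonically isomorphic to $\Proj(\C(W))$, which is the substantive part. For invariance, note that the origin is the unique fixed point of the action (every coordinate is scaled, and $0\cdot t^{w_i}=0$), and for any $t\in\C^*$ and any point with some $a_i\neq 0$ we have $a_it^{w_i}\neq 0$; hence the action preserves $U$ and has no fixed points on $U$.

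For the quotient, I would recognize this as the defining GIT description of $\Proj$. The grading on $\C(W)=\C[x_0,\dots,x_n]$ is equivalent to the given $G_m$-action on $\Spec(\C(W))=\A^{n+1}$, and I would construct the quotient locally over the standard open cover. Cover $U$ by the $G_m$-invariant principal opens $U_i=D(x_i)=\{a_i\neq 0\}$, for $i=0,\dots,n$ (invariance of $U_i$ is the same computation as above, and the $U_i$ cover $U$ precisely because a point lies in $U$ iff some coordinate is nonzero). On rings, $U_i=\Spec(\C(W)_{x_i})$, where the localization carries an induced $\Z$-grading, and I would identify the ring of invariant functions $\OO(U_i)^{G_m}$ with the degree-zero part $(\C(W)_{x_i})_0$. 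This degree-zero subring is by definition the coordinate ring of the standard affine chart $D_+(x_i)$ of $\Proj(\C(W))$, so each $U_i\to\Spec((\C(W)_{x_i})_0)=D_+(x_i)$ is the candidate quotient map.

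I would then verify that each chart map is a geometric quotient. This requires: (i) surjectivity and that its fibers are exactly the $G_m$-orbits; (ii) that it is submersive; and (iii) that $\OO_{D_+(x_i)}$ is the invariant subsheaf. For (i), the orbit of a nonzero point is closed in $U$ (its closure in $\A^{n+1}$ only adds the origin, which is deleted, since all $w_i>0$ force every coordinate to $0$ as $t\to 0$), and two points of $U_i$ have the same image iff they differ by the action, which is a direct check on the degree-zero invariants; (iii) follows from the identification of invariant rings. A genuine subtlety to flag here is that the action on $U$ is not free: a point with nonzero coordinates indexed by $S\subseteq\{0,\dots,n\}$ has stabilizer the finite group of roots of unity $\{t:t^{w_j}=1\ \forall j\in S\}$. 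Geometric quotients permit such finite stabilizers (this is exactly the source of the quotient singularities of $\PP^n(W)$), so this does not obstruct the construction, but it must be acknowledged when checking that fibers equal orbits.

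Finally I would glue and establish universality. The chart quotients agree on overlaps $U_i\cap U_j=D(x_ix_j)$ because the degree-zero localizations glue exactly as in the construction of $\Proj$, so the maps $U_i\to D_+(x_i)$ patch to a global morphism $\pi:U\to\Proj(\C(W))=\PP^n(W)$ that is a geometric quotient. For the \emph{universal} claim, I would invoke that $G_m$ is linearly reductive over $\C$ and that on each affine chart the quotient is given by taking $G_m$-invariants; formation of invariants for a linearly reductive group commutes with flat base change, and more structurally this $\pi$ is the GIT quotient of $\A^{n+1}$ for the linearization of the trivial bundle twisted by the standard character, whose semistable locus is precisely $U$ and all of whose semistable points are stable. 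GIT then guarantees the quotient is a universal geometric quotient. I expect the main obstacle to be the careful verification of the geometric-quotient axioms on the affine charts in the presence of the finite stabilizers — precisely the point-set identification of fibers with orbits and the claim that the invariant ring is exactly $(\C(W)_{x_i})_0$ — with the gluing and universality being essentially formal once the local picture is secured.
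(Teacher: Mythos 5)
The paper offers no proof of this lemma at all: it is quoted verbatim from Dolgachev \cite[1.2.1]{dolgachev}, with the section explicitly deferring all proofs to the source, so there is no internal argument to compare against. Your proposal is correct and follows exactly the standard argument behind that citation --- invariance of $U$, the chart-wise identification $\OO(D(x_i))^{G_m}=(\C(W)_{x_i})_0=\OO(D_+(x_i))$, the check that closed orbits with finite stabilizers make each chart a geometric quotient, gluing as in the construction of $\Proj$, and reductivity/GIT (with $U$ as the locus of stable points) for universality --- so there is nothing to flag beyond the routine verifications you yourself identified as the remaining work.
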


\begin{definition}\cite[5.11]{fletcher}
The weighted projective space $\PP^n(W)$ is called {\it well-formed} if, for every $i=0,\dots,n$, we have
$\gcd(w_0,\dots,\hat{w_i},\dots,w_n)=1.$
\end{definition}

\begin{lemma}\cite[1.3.1]{dolgachev}
For any finite set $W$, let $d_i=\gcd(w_0,\dots,\hat{w_i},\dots,w_n)$, and $a_i=\lcm(d_0,\dots,\hat{d_i},\dots,d_n)$. Then $\PP(W)\cong \PP(W')$, where $W'=\{\frac{w_0}{a_0},\dots,\frac{w_n}{a_n}\}$.
Moreover, for any $i=0,\dots,n$, we have
$$\gcd(w'_0,\dots,\hat{w_i}',\dots,w'_n)=1.$$
\end{lemma}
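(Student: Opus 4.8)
The statement is standard in the theory of weighted projective spaces, and I would prove it by reducing $W$ one factor at a time via an explicit quotient construction, then checking well-formedness numerically. Throughout I may assume $\gcd(w_0,\dots,w_n)=1$: rescaling all weights by a common factor does not change $\Proj\C(W)$, and this normalization has the crucial consequence that the integers $d_0,\dots,d_n$ are pairwise coprime. Indeed, if a prime $p$ divided both $d_i$ and $d_j$ with $i\neq j$, then $p$ would divide every weight (those other than $w_i$ because $p\mid d_i$, and $w_i$ itself because $p\mid d_j$ and $i\neq j$), contradicting $\gcd(w_0,\dots,w_n)=1$. Consequently $a_i=\lcm(d_0,\dots,\hat{d_i},\dots,d_n)=\prod_{j\neq i}d_j$, and since each $d_j$ with $j\neq i$ divides $w_i$ and these are pairwise coprime, $a_i\mid w_i$; thus $W'$ consists of positive integers.

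The geometric core is a single-step reduction: if $m=\gcd(w_0,\dots,\hat{w_i},\dots,w_n)$ and $\gcd(m,w_i)=1$, then dividing every weight except $w_i$ by $m$ yields an isomorphic space. To see this, write $\PP(W)=(\A^{n+1}\setminus 0)/G_m$ with $G_m$ acting by the weights $w_k$, and consider the subgroup $\mu_m\subset G_m$ of $m$-th roots of unity. Because $m\mid w_k$ for $k\neq i$, the group $\mu_m$ acts trivially on all coordinates except $x_i$, and because $\gcd(m,w_i)=1$ it acts faithfully on $x_i$; quotienting the $x_i$-line by $\mu_m$ via $x_i\mapsto x_i^{m}$ identifies $(\A^{n+1}\setminus0)/\mu_m$ with $\A^{n+1}\setminus0$, and the residual $G_m/\mu_m\cong G_m$ then acts with the weights $w_i$ and $w_k/m$ for $k\neq i$. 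This exhibits the claimed isomorphism on the level of quotients.

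With the reduction in hand, I would apply it successively, once for each index $i$, each time dividing the weights other than $w_i$ by $d_i$. The pairwise coprimality of the $d_i$ guarantees that the hypotheses persist: a short valuation check shows that at the stage where index $i$ is treated, the gcd of the (partially reduced) weights omitting the $i$-th is still exactly $d_i$, while $\gcd(d_i,w_i)=1$ because it divides $\gcd(w_0,\dots,w_n)=1$, a coprimality inherited by any divisor of $w_i$. Tracking the cumulative effect, the weight $w_k$ is divided once by each $d_i$ with $i\neq k$, i.e. by $\prod_{i\neq k}d_i=a_k$, so the final space is exactly $\PP(W')$.

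It remains to verify that $W'$ is well-formed. This I would settle prime by prime: fixing a prime $p$ and comparing the $p$-adic valuations $v_p(w'_k)=v_p(w_k)-v_p(a_k)$, one distinguishes whether $p$ is coprime to exactly one of the original weights or to at least two of them, and in each case shows $\min_{k\neq i}v_p(w'_k)=0$ for every $i$; hence no prime divides $\gcd(w'_0,\dots,\hat{w'_i},\dots,w'_n)$. I expect the main obstacle to be precisely the bookkeeping in the iteration, namely confirming that the single-step hypothesis $\gcd(m,w_i)=1$ and the value of the relevant gcd are preserved through successive reductions; this is where the pairwise coprimality of the $d_i$, and hence the normalization $\gcd(w_0,\dots,w_n)=1$, does the essential work.
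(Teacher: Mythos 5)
The paper itself does not prove this lemma: it is quoted from Dolgachev [1.3.1], and the section explicitly defers all proofs of the preliminaries to the sources. Measured against the standard (Dolgachev) argument, your proof is correct and is essentially the same argument in geometric rather than algebraic clothing. Your one-step reduction via the subgroup $\mu_m\subset G_m$ is exactly his truncation step: the $\mu_m$-invariants of $\C[x_0,\dots,x_n]$ are the Veronese subalgebra $S^{(m)}$ of elements of degree divisible by $m$ --- a monomial $x_0^{b_0}\cdots x_n^{b_n}$ has degree $\equiv b_iw_i \pmod m$, so with $\gcd(m,w_i)=1$ this forces $m\mid b_i$ --- which is the polynomial ring $\C[x_0,\dots,x_i^m,\dots,x_n]$; rescaling the grading by $1/m$ and using $\Proj S\cong\Proj S^{(m)}$ yields the weights $w_i$ and $w_k/m$ for $k\neq i$, matching your residual-torus computation. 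The algebraic route stays inside graded rings and needs no quotient formalism; yours makes the finite group action (the source of the quotient singularities) visible. Your bookkeeping is sound: pairwise coprimality of the $d_i$ under the normalization $\gcd(w_0,\dots,w_n)=1$ gives $a_i=\prod_{j\neq i}d_j\mid w_i$ and preserves the single-step hypotheses through the iteration, and your prime-by-prime check of well-formedness goes through (if $p$ fails to divide at least two of the $w_k$ the claim is immediate; if $p\nmid w_j$ only, then $v_p(a_k)=v_p(d_j)=\min_{k\neq j}v_p(w_k)$ for all $k\neq j$, so both $w'_j$ and a minimizing $w'_{k_0}$ are prime to $p$).

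One loose end to close: your opening normalization silently replaces $W$ by $W/c$ with $c=\gcd(w_0,\dots,w_n)$, but the lemma asserts $\PP(W)\cong\PP(W')$ with $W'$ computed from the \emph{original} $W$, so you must check that the construction commutes with rescaling, i.e.\ $(W/c)'=W'$. This is a one-line verification --- $d_i(W)=c\,d_i(W/c)$ and $\lcm(cx_1,\dots,cx_n)=c\,\lcm(x_1,\dots,x_n)$ give $a_i(W)=c\,a_i(W/c)$, hence $w_i/a_i(W)=(w_i/c)/a_i(W/c)$ --- but as written your proof of the stated lemma is not complete without it.
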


\begin{remark}
As a consequence of the lemma above, we can always assume the weighted projective space $\PP^n(W)$ is well-formed.
\end{remark}

All weighted projective spaces $\PP^n(W)$ are normal irreducible projective algebraic varieties. We point out a few pathologies that make them quite different from the usual projective space $\PP^n=\PP(1,\dots,1)$:

\begin{enumerate}
\item[(i)] $\PP^n(W)$ has cyclic quotient singularities; $\PP^n(W)$ is nonsingular if and only if $W$ reduces to $(1,\dots,1)$ and $\PP^n(W)\cong\PP^n$.
\item[(i)] On $\PP^n$, for any $k\in\Z$, $\OO_{\PP^n}(k)$ is an invertible sheaf. This is no longer true on weighted projective spaces.
\item[(ii)] On $\PP^n$, for any $k,l\in\Z$, we have an isomorphism $\OO_{\PP^n}(k)\otimes\OO_{\PP^n}(l)\cong\OO_{\PP^n}(k+l)$. This is no longer true on weighted projective spaces.
\end{enumerate}

\begin{definition}\cite[3.1.1]{dolgachev}
Let $X\hookrightarrow \PP^n(W)$ be a closed subvariety. Let $U=\Spec(\C(W))-\{\mathfrak{m}\}\subset \A^{n+1}$, where $\mathfrak{m}=(x_0,\dots, x_n)$, and let $p:U\to \PP^n(W)$ the canonical projection. The scheme closure of $C_X^*=p^{-1}(X)$ in $\A^{n+1}$, denoted $C_X$, is called {\it the affine quasicone} over $X$. The variety $X$ is called {\it quasismooth} with respect to the embedding in $\PP^n(W)$ if the affine quasicone is smooth outside its vertex.
\end{definition}

Now we turn our focus to smooth weighted plane curves. The first results can be found in \cite{fletcher}. For more information, please see \cite{fletcher}, Sections 8 and 12.

 \begin{theorem}\normalfont \cite[12.1]{fletcher}
 A weighted curve complete intersection is smooth if and only if it is quasismooth.
 \end{theorem}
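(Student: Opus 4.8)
The plan is to translate both notions---smoothness of $X$ and smoothness of the punctured quasicone $C_X^*=C_X\setminus\{0\}$---into a single local statement in the standard affine charts of $\PP^n(W)$, and then to compare them through the geometric quotient $q\colon C_X^*\to X$ induced by the $G_m$-action. First I would set up the chart reduction. Fix an index $i$ and consider $U_i=\{x_i\neq 0\}$. The morphism $G_m\times\{x_i=1\}\to\{x_i\neq 0\}\subset\A^{n+1}$ sending $(t,(\dots,x_j,\dots))$ to $(\dots,t^{w_j}x_j,\dots)$ (with the $i$-th input normalized to $1$) is finite \'etale of degree $w_i$---a $\mu_{w_i}$-Galois cover---so smoothness both descends and ascends along it. Pulling back $C_X^*\cap\{x_i\neq 0\}$ gives $G_m\times\hat X_i$, where $\hat X_i:=C_X\cap\{x_i=1\}\subset\A^n$, so $C_X^*$ is smooth over $U_i$ if and only if $\hat X_i$ is smooth; meanwhile on the quotient side $X\cap U_i=\hat X_i/\mu_{w_i}$, with $\mu_{w_i}$ acting diagonally by the weights $w_j\bmod w_i$. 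Since $X$ is cut out by the same equations, $\hat X_i$ is a complete intersection, hence a Cohen--Macaulay curve. In this language the theorem reduces, chart by chart, to: \emph{$\hat X_i$ is smooth if and only if $\hat X_i/\mu_{w_i}$ is smooth}.

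For the implication quasismooth $\Rightarrow$ smooth---the direction actually needed later---I would argue globally and bypass the charts. If $C_X^*$ is smooth then it is normal, and $X=C_X^*/G_m$ is the universal geometric quotient from Lemma~1.2.1; since the ring of invariants of a normal ring is normal, $X$ is normal. A one-dimensional normal variety over $\C$ is regular in codimension one, hence smooth, so $X$ is smooth. (Equivalently, in chart terms, $\hat X_i/\mu_{w_i}$ is a normal, hence smooth, curve.) This settles one implication completely, using nothing beyond normality of invariants and the triviality of curve quotient singularities; notably it holds even when $X$ meets the singular points of $\PP^n(W)$.

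For the converse, smooth $\Rightarrow$ quasismooth, the key is to control the $G_m$-action along $X$ by means of well-formedness. The standing assumption $\gcd(w_0,\dots,\hat{w_k},\dots,w_n)=1$ for all $k$ translates, in each chart $U_i$, into the statement that for every $\ell>1$ dividing $w_i$ the fixed locus of $\mu_\ell\subseteq\mu_{w_i}$ on $\A^n$ has codimension at least two; equivalently $\mu_{w_i}$ acts freely off a locus of codimension $\geq 2$ (for a plane curve, $n=2$, this is just the origin). Combined with the hypothesis that the pair $X\subset\PP^n(W)$ is well-formed---so that the curve $X$, being one-dimensional, must avoid the singular points of $\PP^n(W)$ entirely---this forces $\mu_{w_i}$ to act \emph{freely} along $\hat X_i$. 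Then $\hat X_i\to\hat X_i/\mu_{w_i}=X\cap U_i$ is \'etale, and smoothness transfers from $X$ back to $\hat X_i$, hence to $C_X^*$, giving quasismoothness.

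The main obstacle is exactly this converse, together with the indispensability of well-formedness in it. Dropping the well-formedness of the pair makes the implication false even for an abstractly smooth curve: the curve $V(x_2^2-x_1^3)\subset\PP^2(5,2,3)$ passes through the quotient singularity $[1:0:0]$, where the chart $X\cap U_0$ has coordinate ring $\bigl(\C[x_1,x_2]/(x_2^2-x_1^3)\bigr)^{\mu_5}=\C[x_1x_2]$, a polynomial ring, so $X$ is smooth; yet its quasicone $V(x_2^2-x_1^3)\subset\A^3$ is singular along the entire $x_0$-axis, so $X$ is not quasismooth. Thus any correct proof of the converse must invoke well-formedness precisely to prevent the curve from meeting a cyclic quotient singularity whose quotient map can smooth out a genuine singularity of the slice $\hat X_i$. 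The remaining points---verifying that the \'etale descent of smoothness is uniform across the finitely many charts and that the Cohen--Macaulay/complete-intersection structure is inherited by each $\hat X_i$---are routine and I would not belabor them.
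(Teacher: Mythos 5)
The paper does not prove this statement at all: it is quoted verbatim from Fletcher \cite[12.1]{fletcher}, with the reader referred to the source for proofs. So there is no internal proof to compare against, and your argument must be judged on its own merits. It is correct, and it is essentially the standard argument (also the one underlying Fletcher's). The chart mechanics check out: $(t,x)\mapsto t\cdot x$ does give a finite \'etale $\mu_{w_i}$-cover $G_m\times\{x_i=1\}\to\{x_i\neq 0\}$ (the deck group acts by translation on the $G_m$ factor, hence freely), so smoothness of $C_X^*$ over $U_i$ is equivalent to smoothness of the slice $\hat X_i$, and $X\cap U_i=\hat X_i/\mu_{w_i}$. The quasismooth $\Rightarrow$ smooth direction via normality of invariant rings plus ``normal curve = smooth curve'' is clean and, as you note, needs no well-formedness; this is also the only direction the paper actually uses (the general member of a good quadruple is quasismooth by Theorem \ref{monomial-condition}, and one wants it smooth of the genus given by (\ref{genus-formula})). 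For the converse, your codimension computation from $\gcd(w_0,\dots,\hat{w_k},\dots,w_n)=1$ is right, and it does double duty: it shows the fixed loci have codimension $\geq 2$, hence that no element of $\mu_{w_i}$ acts as a quasi-reflection, which (by the Chevalley--Shephard--Todd criterion, a step you use implicitly and could state) is what guarantees that points with nontrivial stabilizer map to genuinely singular points of the quotient; avoiding $\mathrm{Sing}\,\PP^n(W)$ then forces the action on $\hat X_i$ to be free and the quotient map \'etale.

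Your caveat about well-formedness of the pair is not a deviation but a genuine improvement on the statement as transcribed in the paper, and it is faithful to Fletcher, whose standing conventions assume complete intersections well-formed (for a curve, $\mathrm{codim}_X(X\cap \mathrm{Sing}\,\PP^n(W))\geq 2$ means the curve misses the singular points entirely). Your counterexample is correct: for $V(x_2^2-x_1^3)\subset\PP^2(5,2,3)$, the invariant monomials $x_1^ax_2^b$ on the curve satisfy $2a+3b\equiv 0 \bmod 5$ with $b\in\{0,1\}$, and with $v=x_1x_2$ one has $v^2=x_1^2x_2^2=x_1^5$ on the curve, so the invariant ring is exactly $\C[v]$ and the chart is smooth, while the quasicone is singular along the whole $x_0$-axis; the other two charts are quotients of disjoint smooth sheets, so the curve is globally smooth but not quasismooth. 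The only nontrivial gap worth flagging is the implicit Chevalley--Shephard--Todd step just mentioned; everything else you deferred as routine genuinely is routine.
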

 
 \begin{theorem}\normalfont
 \cite[8.4]{fletcher}
 \label{monomial-condition}
Let $C_d=V(f)\subset\PP^2(W)$ be a general curve of degreee $d$, where $d>w_i,\forall i$. The curve is (quasi)smooth if and only if the following conditions hold for all $i$:
 \begin{enumerate}
 \item[(i)] $f$ contains a monomial $x_i^kx_j$ of degree $d$ for some power $k$ and index $j$, not necessarily different than $i$.
 \item[(ii)] $f$ contains a monomial of degree $d$ which does not involve the variable $x_i$.
 \end{enumerate}
 \end{theorem}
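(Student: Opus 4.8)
The plan is to work on the affine quasicone $C_f=V(f)\subset\A^3$ and to characterise quasismoothness as the statement that the three partial derivatives $\partial_0 f,\partial_1 f,\partial_2 f$ have no common zero on $\A^3\setminus\{0\}$. The Euler relation $\sum_i w_i x_i\,\partial_i f=d\,f$ shows that any common zero of the partials automatically lies on $C_f$, so quasismoothness is exactly the condition $V(\partial_0 f,\partial_1 f,\partial_2 f)\subseteq\{0\}$. I would then stratify $\A^3\setminus\{0\}$ by the coordinate support of a point: the open torus $\{x_0x_1x_2\neq 0\}$, the three punctured coordinate planes $\{x_i=0,\ x_jx_k\neq 0\}$, and the three punctured axes $\{x_j=x_k=0,\ x_i\neq 0\}$. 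The curve is quasismooth precisely when no common zero of the partials occurs on any of these seven strata, and I would treat each family of strata in turn, recording for which monomial supports of $f$ this holds.

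For the torus and the two-dimensional strata I would invoke Bertini's theorem in the form: a general member of a linear system on a smooth variety is smooth away from the base locus of the system. Here the ambient $\A^3$ is smooth and the system is spanned by all degree-$d$ monomials, so its base locus is the common zero set of these monomials. Every monomial is nonzero on the torus, so the base locus misses the torus and a general $V(f)$ is automatically smooth there, with no hypothesis needed. On the plane $\{x_i=0\}$ a monomial is nonzero exactly when it omits $x_i$, and condition (ii) for the index $i$ is precisely the assertion that such a monomial exists; in that case Bertini again gives smoothness of a general $V(f)$ on that stratum. Conversely, if (ii) fails then every degree-$d$ monomial is divisible by $x_i$, so $f=x_i g$; a direct computation shows $\partial_j f$ vanishes on $\{x_i=0\}$ for $j\neq i$ while $\partial_i f|_{x_i=0}=g|_{x_i=0}$, whose zero locus in the plane $\{x_i=0\}$ is nonempty away from the origin (as $\deg g=d-w_i>0$), producing singular points off the vertex for every $f$. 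This matches condition (ii) exactly with the two-dimensional strata.

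The axes require a hands-on computation rather than Bertini, because when $w_i\nmid d$ the whole $x_i$-axis lies in the base locus. Restricting each partial to the $x_i$-axis, only monomials of $f$ of the shape $x_i^{a}$ or $x_i^{a}x_j$ survive: $\partial_i f$ restricts to a nonzero multiple of $x_i^{a-1}$ exactly when $f$ contains a pure power $x_i^{a}$, and $\partial_j f$ with $j\neq i$ restricts to a nonzero multiple of $x_i^{a}$ exactly when $f$ contains $x_i^{a}x_j$. Hence some partial is nonvanishing on the punctured axis if and only if $f$ contains a monomial $x_i^k x_j$ for some $k$ and some index $j$ (allowing $j=i$), which is exactly condition (i); if (i) fails, all three partials vanish identically along the axis and $V(f)$ is singular there for every $f$. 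Assembling the three families, a general $f$ — which by genericity carries every admissible monomial with nonzero coefficient — is quasismooth if and only if (i) and (ii) hold for all $i$. The main obstacle I anticipate is the careful bookkeeping in the Bertini step: one must verify that the ``general member is smooth off the base locus'' statement applies on the smooth quasi-affine $\A^3\setminus\{0\}$ with the linear system of degree-$d$ forms, and then patch the finitely many dense open conditions coming from the torus and the coordinate planes together with the explicit axis computation into a single dense set of quasismooth $f$.
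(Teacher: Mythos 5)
The paper gives no proof of this statement --- it is quoted from \cite[8.4]{fletcher} with proofs deferred to the source --- and your proposal is, in substance, a correct reconstruction of the standard argument there: the Euler relation $\sum_i w_ix_i\,\partial_i f=d\,f$ reduces quasismoothness to the partials having no common zero on $\A^3\setminus\{0\}$, the stratification by coordinate support matches condition (i) exactly to the three punctured axes and condition (ii) to the three punctured coordinate planes, and Bertini in characteristic $0$ disposes of the torus and plane strata while the axes are checked by the explicit monomial computation (correctly: for each $j$ the exponent $k$ with $kw_i+w_j=d$ is unique, so no cancellation can occur in the restricted partial). Two small points deserve a line in a written-up version: in the failure of (ii), where $f=x_ig$, the restriction $g|_{x_i=0}$ may vanish identically, in which case the entire punctured plane consists of singular points rather than the zero locus of a nonconstant weighted-homogeneous form (this is also where the hypothesis $d>w_i$ enters, ruling out $g|_{x_i=0}$ a nonzero constant); and the final patching step should be stated as the observation that the quasismooth locus contains the intersection of finitely many dense open subsets of $\PP V_Q$ --- one Bertini-type condition per stratum plus the nonvanishing of the finitely many distinguished coefficients --- which is again dense open. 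Neither is a gap.
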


 \begin{theorem}
 \normalfont
 \cite[12.2]{fletcher}
  Let $C_d$ be a smooth curve on $\PP^2(W)$. The genus $g$ of $C_d$ is given by the following formula:
 \begin{equation}\label{genus-formula}
 g=\frac{1}{2}(\frac{d(d-\sum_i w_i)}{w_0w_1w_2}+\sum_{i=0}^2\frac{\gcd(w_i,d)}{w_i}-1).
 \end{equation}
 \end{theorem}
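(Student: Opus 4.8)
The plan is to compute the genus through the adjunction formula on the singular surface $S=\PP^2(W)$, reducing the statement to a local analysis at the three cyclic quotient singularities. Since $\PP^2(W)$ is well-formed, its dualizing sheaf is $\omega_S\cong\OO_S(-(w_0+w_1+w_2))$ as a rank-one reflexive sheaf, and the quasismoothness of $C_d=V(f)$ guarantees that adjunction passes to the curve: $\omega_C\cong\OO_C(d-w_0-w_1-w_2)$, where $\OO_C(m)$ denotes the restriction of $\OO_S(m)$. Because $C$ is smooth, $2g-2=\deg\omega_C$, so the whole problem reduces to computing this integer.

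First I would compute the rational part using intersection theory of $\Q$-Cartier divisors on $S$. On $\PP^2(W)$ one has $\OO_S(a)\cdot\OO_S(b)=\tfrac{ab}{w_0w_1w_2}$, so $\deg_{\Q}\OO_C(m)=\tfrac{md}{w_0w_1w_2}$, and hence
\[
\deg_{\Q}\omega_C=\deg_{\Q}\OO_C(d-|W|)=\frac{d(d-w_0-w_1-w_2)}{w_0w_1w_2},
\]
which is exactly the first term of the formula. This would be the entire answer if $C$ avoided the singular locus; the content of the theorem is that it does not.

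The heart of the argument is the correction at the three singular points $P_0,P_1,P_2$ of $\PP^2(W)$, where $P_i$ is a cyclic quotient singularity of order $w_i$. I would take a toric (Hirzebruch--Jung) resolution $\pi:\tilde S\to S$, let $\tilde C$ be the strict transform (which is isomorphic to $C$ since $C$ is smooth, hence has the same genus), and apply honest adjunction $2g-2=\tilde C\cdot(\tilde C+K_{\tilde S})$ on the smooth surface $\tilde S$. Writing $\pi^*\OO_S(d)=\OO_{\tilde S}(\tilde C)+(\text{exceptional})$ and $K_{\tilde S}=\pi^*K_S+(\text{discrepancies})$, the difference between $\deg\omega_{\tilde C}$ and the rational degree above collects into one local term per vertex. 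The claim to verify is that $P_i$ contributes exactly $\tfrac{\gcd(w_i,d)}{w_i}-1$; note this vanishes when $w_i=1$, consistent with $P_i$ being a smooth point. Summing the three vertex corrections with the rational term and dividing by two yields the stated formula.

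The step I expect to be the main obstacle is precisely this local computation at the cyclic quotient singularities: one must track how $\OO_S(d)$ and $K_S$ pull back across the exceptional chains and show that the continued-fraction combinatorics telescopes into the clean quantity $\gcd(w_i,d)/w_i$. Equivalently --- and this is the form best adapted to the lattice-polytope framework of this paper --- the same count can be carried out on the monomial triangle $T=\{(a_0,a_1,a_2)\in\R^3_{\ge0}:a_0w_0+a_1w_1+a_2w_2=d\}\subset H$: a general $f$ defines a nondegenerate curve, so by the theorem of Khovanskii $g$ equals the number of interior lattice points of $T$, and Pick's theorem expresses this as $\mathrm{Area}(T)-\tfrac12 B(T)+1$ with respect to the lattice $H\cap\Z^3$. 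The normalized area gives $\tfrac{d^2}{2w_0w_1w_2}$, and the delicate part is the boundary count $B(T)$ along the three edges $a_i=0$, where the number of nonnegative integer solutions of $a_jw_j+a_kw_k=d$ and the (non)integrality of the rational vertices $\tfrac{d}{w_i}e_i$ combine to produce the terms $\tfrac{d|W|}{w_0w_1w_2}$ and $\sum_i\tfrac{\gcd(w_i,d)}{w_i}$. Before grinding out the general boundary analysis I would confirm that both routes recover $g=\binom{d-1}{2}$ in the case $W=(1,1,1)$, as a consistency check.
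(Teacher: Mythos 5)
First, a point of reference: the paper does not prove this statement at all --- it is quoted from \cite[12.2]{fletcher} without proof, so there is no internal argument to compare against. The closest the paper comes is Lemma \ref{g-int-pts}, which uses Steenbrink's theorem to identify $g$ with $\dim_\C(\C(W))_{d-|W|}$, i.e.\ with the number of lattice points $(a,b,c)\in\Z^3_{\geq 0}$ satisfying $aw_0+bw_1+cw_2=d-|W|$; that identity is exactly what your Khovanskii route would re-derive, and turning it into the closed formula is a counting problem. Your overall architecture is sound and your bookkeeping is consistent: the claimed per-vertex correction $\frac{\gcd(w_i,d)}{w_i}-1$ does sum with the rational term $\frac{d(d-|W|)}{w_0w_1w_2}$ to $2g-2$, it vanishes exactly when $w_i\mid d$ (precisely when the general curve misses $P_i$), and for a quasismooth curve on a well-formed plane one even has $\gcd(w_i,d)\in\{1,w_i\}$ automatically. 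But as written, neither route is a proof: in route A you explicitly defer the Hirzebruch--Jung discrepancy computation, and in route B you defer the boundary analysis --- in both cases the deferred step is the entire content of the theorem, since the rational-degree term is a one-line intersection computation.

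Beyond incompleteness, route B contains a genuine error as stated: ``Pick's theorem expresses this as $\mathrm{Area}(T)-\tfrac12 B(T)+1$'' is false, because $T$ has rational vertices $\tfrac{d}{w_i}e_i$ that are generally not lattice points, and Pick's theorem requires a lattice polygon. Concretely, take $W=(1,1,2)$, $d=5$: in coordinates $(a,c)$ on the lattice $H\cap\Z^3$, the triangle $T$ has vertices $(0,0)$, $(5,0)$, $(0,\tfrac52)$, so $\mathrm{Area}(T)=\tfrac{25}{4}$ and $B(T)=10$, and the naive formula yields $\tfrac94$, whereas the true interior count (and the genus) is $2$. The repair is to apply Pick to the Newton polygon $P=\mathrm{conv}(T\cap\Z^3)$ (in this example $\mathrm{Area}(P)=6$, $B(P)=10$, correctly giving $2$) and then compute the areas of the three trimmed corner triangles near the rational vertices --- equivalently, to invoke Ehrhart theory for rational polygons --- and it is exactly this corner analysis that produces the terms $\tfrac{\gcd(w_i,d)}{w_i}$; your proposal gestures at this (``the (non)integrality of the rational vertices'') but never performs it, and the misstated Pick identity would, if used literally, give wrong answers whenever some $w_i\nmid d$. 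Route A has no comparable misstatement, but its key claim (each $P_i$ contributes $\tfrac{\gcd(w_i,d)}{w_i}-1$) is asserted rather than derived; spot checks such as the one above (contribution $-\tfrac12$ at the $\tfrac12(1,1)$ point, so $2g-2=\tfrac52-\tfrac12=2$) support it, but the general telescoping across the exceptional chain --- or, more efficiently, a direct count of $\#\{(a,b,c)\in\Z^3_{\geq0}: aw_0+bw_1+cw_2=d-|W|\}$ starting from the Steenbrink identity the paper already records --- still has to be carried out before this can be called a proof.
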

 
 \begin{remark}
 In Theorem \ref{monomial-condition}, the degree $d$ has to be strictly higher than the weights. Otherwise, say $d=w_0$. Then $f(x_0,x_1,x_2)=x_0+g(x_1,x_2)$, for some $g\in\C[x_1,x_2]$. This implies $C_d\cong \PP^1(w_1,w_2)$, which is either not smooth, or is smooth of genus 0.
 \end{remark}

  \begin{definition}
 \normalfont
 \label{good-quadruple}
 We call $Q=(w_0,w_1,w_2,d)$ a {\it good quadruple} if $w_0,w_1,w_2\in\Z_{\geq 1}$ are pairwise coprime, $d>w_i,\forall i$, and the congruence conditions in Theorem \ref{monomial-condition} can be satisfied. We call $Q=(w_0,w_1,w_2,d)$ a {\it g-good quadruple} if it is a good quadruple, and every smooth curve on $\PP(w_0,w_1,w_2)$ of degree $d$ has genus $g$.
 \end{definition}
 

\begin{prop}
\label{inf-quadruples}
 For any $g\geq 1$, there are infinitely many $g$-good quadruples $Q=(w_0,w_1,w_2,d)$. 
 \end{prop}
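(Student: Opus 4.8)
The plan is to prove the proposition constructively, by exhibiting a single explicit one-parameter family of quadruples and checking directly that every member is $g$-good. Since a $g$-good quadruple must simultaneously admit a smooth curve and have genus exactly $g$ (Definition \ref{good-quadruple}), the two things I must control at once are the monomial conditions of Theorem \ref{monomial-condition} and the value produced by the genus formula \eqref{genus-formula}; the family will be engineered so that both hold identically in the parameter.

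First I would normalize $w_0=1$. This simplifies both constraints: with $(w_0,w_1,w_2)=(1,a,b)$, if in addition $\gcd(a,d)=\gcd(b,d)=1$ then all three gcd-terms in \eqref{genus-formula} are forced to be $1,\ 1/a,\ 1/b$, and the requirement ``genus $=g$'' becomes, after clearing denominators, a quadratic equation in $d$ whose discriminant is $(a+b-1)^2+8g\,ab$. Thus for a given pair $(a,b)$ an admissible integer degree exists precisely when this discriminant is a perfect square (and a parity condition holds). The key step is then to choose $b$ as a function of $a$ making the discriminant a perfect square identically: writing $b=\lambda a+\mu$ and demanding that $(a+b-1)^2+8g\,ab$ be the square of a linear form in $a$ pins down $b=(g+1)a-1$, and the quadratic in $d$ then has the integer root
$$d=2(g+1)a-1=2b+1.$$

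This yields the candidate family $Q_a=\bigl(1,\;a,\;(g+1)a-1,\;2(g+1)a-1\bigr)$ for $a\ge 1$. I would check immediately that $\gcd\!\bigl(a,(g+1)a-1\bigr)=\gcd(a,-1)=1$ and $\gcd(b,2b+1)=\gcd(b,1)=1$, so the weights are pairwise coprime, the three gcd-terms are indeed $1,\ 1/a,\ 1/b$, and $d>w_0,w_1,w_2$. Substituting back into \eqref{genus-formula} and simplifying, the formula returns the value $g$ identically in $a$, so every $Q_a$ has genus exactly $g$, and since $w_1=a$ is unbounded the family contains infinitely many distinct quadruples.

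The only real obstacle is smoothability, and it is instructive to see why: naive constant-genus families (for instance $(1,1,n)$ with $d=n+g+1$, whose Newton triangle has exactly $g$ interior lattice points) nonetheless fail the monomial conditions at the quotient singularity $[0:0:1]$ once the large weight exceeds $g+1$. The choice $d=2b+1$ is precisely what repairs this. For Theorem \ref{monomial-condition} I would display the required monomials explicitly: condition (i) is satisfied by $x_0^{\,d}$ for $x_0$, by $x_2^2x_0$ (degree $2b+1=d$) for $x_2$, and by $x_1^{\,g+1}x_2$ (degree $(g+1)a+b=d$) for $x_1$; condition (ii) is satisfied by $x_1^{\,g+1}x_2$ (no $x_0$) and by $x_0^{\,d}$ (no $x_1$, no $x_2$). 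Hence each $Q_a$ is a good quadruple of genus $g$, i.e.\ a $g$-good quadruple, and the family proves the proposition.
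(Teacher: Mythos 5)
Your proposal is correct and is essentially the paper's own proof: with $a=m$ and the roles of $w_1$ and $w_2$ swapped, your family $\bigl(1,\,a,\,(g+1)a-1,\,2(g+1)a-1\bigr)$ is exactly the paper's family $\bigl(1,\,\frac{d-1}{2},\,\frac{d+1}{2g+2},\,d\bigr)$ with $d\equiv -1 \pmod{2g+2}$, and even your certifying monomials $x_0^d$, $x_2^2x_0$, $x_1^{g+1}x_2$ coincide with the paper's $x_0^d$, $x_1^2x_0$, $x_2^{g+1}x_1$ under that swap. The only differences are expository --- you motivate the family via the discriminant of the genus equation and spell out condition (ii) of Theorem \ref{monomial-condition} explicitly --- so nothing further is needed.
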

 
\begin{proof}
Fix a genus $g\geq 1$. Let $d\geq 1$ such that $d\equiv -1\mod (2g+2)$. We claim $(1,\frac{d-1}{2},\frac{d+1}{2g+2},d)$ is a $g$-good quadruple. The weights $1,\frac{d-1}{2},\frac{d+1}{2g+2}$, are pairwise coprime, and the congruence conditions of Theorem \ref{monomial-condition} are satisfied by the monomials $x_0^d, x_1^2x_0, x_2^{g+1}x_1$. Applying the genus formula (\ref{genus-formula}) for a general curve $C_d$, we obtain $g(C_d)=g$, as desired.
\end{proof}

Fix $g\geq 1$. For any $Q=(w_0,w_1,w_2,d)$ a $g$-good quadruple, let $\mathcal{U}^{sm}_Q\subset \PP V_Q$ be the (nonempty) open set of smooth curves of degree $d$ on $\PP(w_0,w_1,w_2)$, where $V_Q$ is the vector space parametrizing degree-$d$ curves on $\PP^2(w_0,w_1,w_2)$. Let $i:\mathcal{U}^{sm}_Q\to M_g$ be the natural map to the moduli space, and let $Z_Q=\overline{i(\mathcal{U}^{sm}_Q)}$ be the corresponding closed irreducible subset in $M_g$. Then the main theorem of this paper states that there are only finitely many loci $Z_Q\in M_g$, as $Q$ varies over all $g$-good quadruples.

\section{Proof of Main Theorem}

\hspace{6mm}Polytopes arise naturally in this setting. For a fixed genus $g\geq 1$, let $(w_0,w_1,w_2, d)$ be a $g$-good quadruple. To every monomial $x_0^ax_1^bx_2^c$ of degree $d$, we associate the point $(a,b,c)\in\Z^3$. These points represent all the lattice points in the plane H: $xw_0+yw_1+zw_2=d$, situated in the first octant $(\R_{\geq 0})^3$. Let $P$ be the convex hull of these points.

We denote by $n(P)$ the number of lattice points in $P$. To any polytope $P$ we associate the $n(P)\times 3$ matrix $M(P)$ whose rows represent the coordinates of the lattice points.\\

Because $(w_0,w_1,w_2,d)$ is a $g$-good quadruple, the polytope $P$ will contain the following points [Thm. \ref{monomial-condition}]:
\begin{enumerate}
\item[(a)] for each coordinate $x, y, z$, there exists a point in $P$ whose other two coordinates have sum at most 1, i.e.: we either have $(a,0,0)$ or $(a,1,0)$ or $(a,0,1)$; we either have $(0,b,0)$ or $(1,b,0)$ or $(0,b,1)$; we have $(0,0,c)$ or $(1,0,c)$ or $(0,1,c)$.

\item[(b)] for each coordinate $x,y,z$, there exists a point on $P$ which has that coordinate zero, i.e. $P$ has points of the form $(0,b,c),(a',0,c'),(a'',b'',0)$.\\
\end{enumerate}
 
 We call the vectors in (a) {\it distinguished}. In most cases, we have just three distinguished vectors, but in special circumstances we can have more. For example, $(a,0,0)$ and $(a',1,0)$ can both be in $P$; in this case, we focus on $(a,0,0)$. If $(a,0,1)$ and $(a',1,0)$ are in $P$, then we focus on either of them. In addition, we observe that these three vectors are all different, since $a,b,c> 1$; otherwise, $P$ wouldn't have any interior points, which is not possible by Lemma \ref{g-int-pts}. The triangle with the three picked vertices will be called {\it the distinguished triangle $\Delta$}. The corresponding {\it distinguished} $3\times 3$ minor in $M(P)$ will also be named $\Delta$, by abuse of notation.
  
 \begin{remark}
 Let $(w_0,w_1,w_2,d)$ be a good quadruple. Then for every $i$, we have either $w_i|d$, or $\gcd(w_i,d)=1$. To see this, assume by contradiction that $\exists i$ such that $1<\gcd(w_i,d)<w_i$. Say $i=0$; then the associated polytope $P$ does not have a point of the form $(a,0,0)$, so it must have either $(a,1,0)$ or $(a,0,1)$. If $\gcd(w_0,d)=l>1$, then $l$ divides either $w_1$ or $w_2$, as well, contradicting the condition of the weights being coprime.
 \end{remark}
 
 \begin{lemma}
 \label{g-int-pts}
 Let $(w_0,w_1,w_2,d)$ be a $g$-good quadruple, and let $P$ be its associated polytope. Then the number of interior lattice points of $P$ equals $g$.
 \end{lemma}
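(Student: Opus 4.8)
The plan is to prove the lemma by recognizing $P$ as the Newton polygon of the defining equation of $C_d$ on the dense torus of $\PP^2(w_0,w_1,w_2)$, and then applying the classical theorem that a nondegenerate curve has geometric genus equal to the number of interior lattice points of its Newton polygon (Khovanskii). Concretely, the monomials $x_0^ax_1^bx_2^c$ of degree $d$ are exactly the lattice points of $P$, so restricting $f$ to the dense torus $T=(\C^*)^2\subset\PP^2(w_0,w_1,w_2)$ and dehomogenizing yields a Laurent polynomial $\tilde f$ whose support is $P$, and $C_d$ is the closure in $\PP^2(w_0,w_1,w_2)$ of $\{\tilde f=0\}\cap T$. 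If $\tilde f$ is nondegenerate with respect to $P$, then the geometric genus of this curve equals $\#(\mathrm{int}(P)\cap\Lambda)$; since $C_d$ is smooth this is its genus, which is $g$ because $Q$ is a $g$-good quadruple.

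Two points need care. First, the relevant lattice is not $\Z^2$ but $\Lambda=\Z^3\cap H$, the integer points on the slanted plane $H:xw_0+yw_1+zw_2=d$; because $\gcd(w_0,w_1,w_2)=1$ this plane carries lattice points, and $\Lambda$ is a torsor under the rank-two lattice $\Lambda_0=\Z^3\cap\{xw_0+yw_1+zw_2=0\}$. I would first apply a unimodular change of coordinates on $\Z^3$ carrying $H$ to a coordinate plane and $\Lambda$ to $\Z^2$ — the same reduction the paper performs later — which preserves the number of interior lattice points, so that the Newton-polygon count is taken in the correct lattice. Second, one must verify that smoothness of $C_d$, equivalently quasismoothness guaranteed by the monomial conditions of Theorem \ref{monomial-condition}, forces nondegeneracy of $\tilde f$ along each edge of $P$, i.e.\ transversality of $C_d$ to the three torus-invariant curves. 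Making this translation between quasismoothness in the weighted space and edge-nondegeneracy of the Newton polygon precise is the main obstacle, since it is exactly where the weighted/toric dictionary must be pinned down.

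As a self-contained alternative that bypasses the toric machinery and instead uses Fletcher's formula (\ref{genus-formula}) directly, I would compute $\#(\mathrm{int}(P)\cap\Lambda)$ by Pick's theorem in the lattice $\Lambda$, namely $I=A-\tfrac12 B+1$, with $A$ the $\Lambda$-normalized area and $B$ the number of boundary lattice points. The triangle cut out of $H$ by the positive octant has vertices $(d/w_0,0,0)$, $(0,d/w_1,0)$, $(0,0,d/w_2)$; since the primitive normal $(w_0,w_1,w_2)$ has length $\sqrt{w_0^2+w_1^2+w_2^2}$, the lattice $\Lambda_0$ has that covolume, and the normalized area of the triangle works out to $\tfrac{d^2}{2w_0w_1w_2}$, reproducing the leading term of (\ref{genus-formula}). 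The remaining terms $-\tfrac{d\sum_i w_i}{w_0w_1w_2}$ and $\sum_i\tfrac{\gcd(w_i,d)}{w_i}$ should then emerge from the boundary lattice points along the three coordinate edges (where, say, $z=0$ forces $w_0a+w_1b=d$) together with the area removed at the corners, whose vertices are lattice points precisely when $w_i\mid d$. The delicate part of this route is the bookkeeping of these boundary and corner contributions and checking that they assemble exactly into Fletcher's formula.
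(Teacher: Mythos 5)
Your proposal takes a genuinely different route from the paper. The paper's proof is algebraic: by Steenbrink's theorem, $g=\dim_\C(\C(W)/J_f)_{d-|W|}$ for a general smooth $C_d=V(f)$, and since the generators of the Jacobian ideal have degrees $d-w_i>d-|W|$, this equals the number of monomials of degree $d-|W|$, i.e.\ the number of lattice points of $P$ with all coordinates $\geq 1$; a short convexity argument at the ``shaved'' corners (where $w_i\nmid d$, using the distinguished points of type $(a,1,0)$) then shows that these are exactly the interior lattice points of the convex hull $P$. Your first, Newton-polygon route replaces Steenbrink's formula with Khovanskii's theorem and has the advantage of counting the interior of the hull directly, so it bypasses the corner analysis altogether; the paper's route uses only machinery already in play and makes explicit the identification ``interior of $P$ $=$ all coordinates positive,'' which also clarifies the pictures used later for the distinguished triangle. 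Your reduction to the correct lattice $\Lambda=\Z^3\cap H$ via a unimodular transformation is right and necessary.

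The soft spot is where you place ``the main obstacle'': you do not need to show that quasismoothness forces edge-nondegeneracy of $\tilde f$, and trying to build that dictionary is the hard way around. The lemma is a statement about $P$ alone, so it suffices to exhibit \emph{one} curve that is simultaneously smooth and Newton-nondegenerate. Smoothness is open and nonempty in $\PP V_Q$ because $Q$ is good, while nondegeneracy with respect to $P$ is open and dense in the coefficient space, since a general $f$ has support equal to all of $P\cap\Lambda$ (in particular every face of $P$ carries a base-point-free system of monomials on the torus, so Bertini applies face by face). A general member then has genus $g$ by $g$-goodness and genus $\#(\mathrm{int}(P)\cap\Lambda)$ by Khovanskii, proving the lemma; the remaining checks are routine (a general $C_d$ is connected, being an ample hypersurface, hence irreducible, and condition (ii) of Theorem \ref{monomial-condition} ensures no coordinate curve is a component, so $C_d\cap T$ is dense). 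Your second, Pick-theorem route is viable but strictly more work than your sketch suggests: applying Pick to the lattice polygon $P$ requires knowing the hull structure at each corner with $w_i\nmid d$ — that the boundary there is a single edge from the distinguished point to the nearest lattice point on the adjacent coordinate line, with no intervening boundary lattice points — and this is precisely the convexity argument the paper carries out; without it, neither the ``area removed at the corners'' nor the boundary count $B$ is determined. (The paper's remark that $\gcd(w_i,d)\in\{1,w_i\}$ for good quadruples would also simplify the $\gcd$ bookkeeping in formula (\ref{genus-formula}).)
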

 
 \begin{proof}
 Let $Q=(w_0,w_1,w_2,d)$ be a $g$-good quadruple. Our first claim is that $g$ equals the number of nonnegative triples $(a,b,c)\in\Z^3_{\geq 0}$ such that $aw_0+bw_1+cw_2=d-w_0-w_1-w_2$; equivalently, $g$ equals the number of lattice points in the polytope $P$ that are not on the boundary of the octant.
 
 To see this, let $C_d=V(f)$ be a general curve of degree $d$ on $\PP^2(W)$ (thus also smooth of genus $g$). Let $J_f=(\frac{\partial f}{\partial x_0},\frac{\partial f}{\partial x_1}, \frac{\partial f}{\partial x_2})$ be the Jacobian ideal corresponding to polynomial $f$. A theorem by Steenbrink (\cite[4.3.2]{dolgachev}) states the following about the genus $g$:
\begin{equation}
g=h^{0,1}(C_d)=h^{1,0}(C_d)=\dim_{\C}(\C(W)/J_f)_{d-|W|}.
\end{equation}

Since the generators of $J_f$ have degrees $d-w_i$, for $i=0,1,2$, which are strictly bigger than $d-|W|$, it follows that 
\begin{equation}
g=\dim_{\C}(\C(W))_{d-|W|}=\#\{(a,b,c)\in\Z_{\geq 0}|aw_0+bw_1+cw_2=d-|W|\}.
\end{equation}
 
The second step is to show that any lattice point in $P$ which is not on the boundary of the octant is actually an interior point of $P$.

\begin{figure}[H]
\centering
\begin{tikzpicture}[x=0.5cm,y=0.5cm,z=0.3cm,>=stealth]
\node[circle,fill=black,inner sep=0.8pt,draw] (a) at (7,0) {};
\node[circle,fill=black,inner sep=0.8pt,draw] (b) at (0,7) {};
\node[circle,fill=black,inner sep=0.8pt,draw] (c) at (-4,-4) {};
\node[circle,fill=black,inner sep=0.8pt,draw] (d) at (-2.1666,-3.333) {};
\node[circle,fill=black,inner sep=0.8pt,draw] (e) at (-2.666,-0.333) {};
\node[circle,fill=black,inner sep=0.8pt,draw] (f) at (-0.666, 5.1666) {};
\node[circle,fill=black,inner sep=0.8pt,draw] (g) at (1.1666, 5.8333) {};

\draw[dashed][->] (xyz cs:x=0) -- (xyz cs:x=8.5) node[above] {$y$};
\draw[dashed][->] (xyz cs:y=0) -- (xyz cs:y=8.5) node[right] {$z$};
\draw[dashed][->] (xyz cs:z=0) -- (xyz cs:z=-8.5) node[above] {$x$};
\draw(a)--(8.5,0);
\draw(b)--(0,8.5);
\draw(c)--(-5,-5);
\draw(a)--(b)--(c)--(a);
\draw[line width=0.6mm](d)--(e)--(f)--(g)--(a)--(d);

\node[circle,fill=black,inner sep=0.8pt,draw] (h) at (15,0) {};
\node[circle,fill=black,inner sep=0.8pt,draw] (i) at (24,0) {};
\node[circle,fill=black,inner sep=0.8pt,draw] (j) at (22,7) {};
\node[circle,fill=black,inner sep=0.8pt,draw] (k) at (16.8,0) {};
\node[circle,fill=black,inner sep=0.8pt,draw] (l) at (18.5,3.5) {};
\node[circle,fill=black,inner sep=0.8pt,draw] (m) at (19.5,3.5) {};
\node[circle,fill=black,inner sep=0.8pt,draw] (n) at (23,2.5) {};
\node[circle,fill=black,inner sep=0.8pt,draw] (o) at (21.5,1) {};

\draw(h)--(24,0);
\draw(h)--(22,7);
\draw[line width=0.6mm](i)--(k)--(l)--(j);
\draw[dashed][line width=0.6mm](i)--(26,0);
\draw[dashed][line width=0.6mm](j)--(24,9);

\node () at (13.5,-0.5) {\tiny$(\frac{d}{w_0},0,0)$};
\node () at (16.8,-1) {\tiny$A=(a,1,0)$};
\node () at (17.1,3.78) {\tiny$B=(c,0,d)$};

\end{tikzpicture}
\caption{Polytope in the case $w_0, w_2\nmid d, w_1|d$}
\label{3d-polytope}
\end{figure}
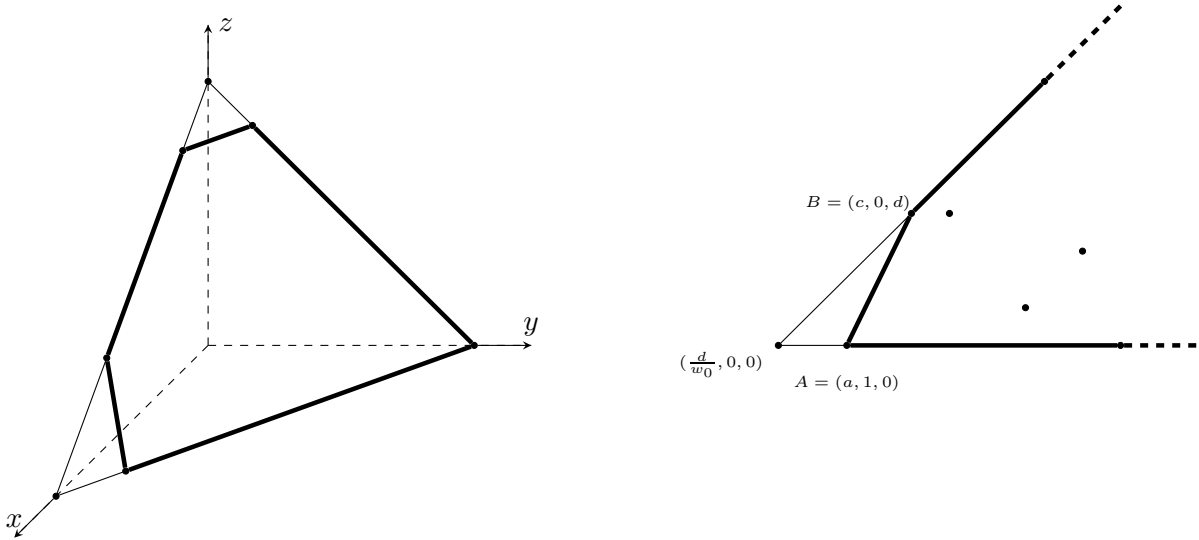

 Our polytope looks similar to the one in Figure \ref{3d-polytope}. We only need to check what happens around the axes corresponding to the variables whose weights don't divide $d$. Say $w_0\nmid d$, therefore $(a,1,0)$ or $(a,0,1)$ are in $P$; suppose $A=(a,1,0)\in P$. Pick the closest lattice point $B=(c,0,d)$ on the other side (we know such a point exists). Any point on the open segment $(AB)$ or between the segment and the vertex $(\frac{d}{w_0},0,0)$ has $y$-coordinate smaller than 1, which means it cannot be a lattice point. Therefore, there are no other lattice points of $P$ in this region. We conclude that a lattice point $(a,b,c)$ is in the interior of $P$ if and only if $a,b,c\geq 1$, thus the number of interior lattice points is exactly $g$.
 \end{proof}

 \begin{lemma}
 \label{all-minors-div-d}
Let $(w_0,w_1,w_2,d)$ be a $g$-good quadruple. Then all $3\times 3$ minors of $M(P)$ have determinant divisible by $d$.
 \end{lemma}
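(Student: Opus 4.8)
The plan is to exploit the single linear relation that every row of $M(P)$ satisfies. By construction, each lattice point $(a,b,c)$ of $P$ comes from a degree-$d$ monomial, so $aw_0+bw_1+cw_2=d$. Writing $\mathbf{w}=(w_0,w_1,w_2)^{T}$ and $\mathbf{1}=(1,1,1)^{T}$, this says that for any three chosen rows, assembled into a $3\times 3$ submatrix $M$ of $M(P)$, we have $M\mathbf{w}=d\,\mathbf{1}$. Thus the weight vector is a solution of a linear system whose right-hand side is $d\,\mathbf{1}$, and I would extract divisibility information about $\det M$ from this one relation.

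Concretely, I would use a Cramer-type identity. For each index $j\in\{0,1,2\}$, let $M^{(j)}$ be the matrix obtained from $M$ by replacing its $j$-th column with $\mathbf{1}$. Expanding the vector $M\mathbf{w}=d\,\mathbf{1}$ by multilinearity of the determinant in the $j$-th column, and discarding the terms that produce two equal columns, one obtains the identity
\[
w_j\det M = d\,\det M^{(j)}, \qquad j=0,1,2.
\]
This identity holds with no invertibility hypothesis, so the degenerate case $\det M=0$ needs no separate treatment. Since $M(P)$ has integer entries, each $\det M^{(j)}$ is an integer, and therefore $d\mid w_j\det M$ for every $j$.

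The final step is to remove the weights from this divisibility, and this is exactly where the good-quadruple hypothesis enters: the weights $w_0,w_1,w_2$ are pairwise coprime. Choosing integers $s,t$ with $sw_0+tw_1=1$, I would write $\det M=s\,(w_0\det M)+t\,(w_1\det M)$; since $d$ divides both $w_0\det M$ and $w_1\det M$, it divides $\det M$. As $M$ was an arbitrary $3\times 3$ submatrix, every $3\times 3$ minor of $M(P)$ is divisible by $d$.

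I expect the only genuinely delicate point to be this last passage: each individual relation $d\mid w_j\det M$ is too weak on its own, and it is precisely the pairwise coprimality of the weights built into the definition of a good quadruple that lets the common factor $d$ survive. The Cramer-type identity itself is routine, and the geometric content of the lemma is entirely concentrated in the single relation $M\mathbf{w}=d\,\mathbf{1}$.
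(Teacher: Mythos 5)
Your proposal is correct and is essentially the paper's own proof: your Cramer-type identity $w_j\det M = d\,\det M^{(j)}$ is exactly what the paper derives by explicit column operations (scaling one column by $w_j$ and adding the weighted remaining columns to produce a column of $d$'s), resting on the same key relation $M\mathbf{w}=d\,\mathbf{1}$. Both arguments then conclude identically, using $\gcd(w_0,w_1)=1$ and a B\'ezout combination to pass from $d\mid w_0\det M$ and $d\mid w_1\det M$ to $d\mid\det M$.
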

 
 \begin{proof}
Let 
 $$A=\begin{pmatrix} 
 x_0&&y_0&&z_0\\
 x_1&&y_1&&z_1\\
 x_2&&y_2&&z_2\end{pmatrix}$$
 be a minor of $M(P)$.
 We have the following:
 
\begin{align*}
 w_0\det(A) =& \det\begin{pmatrix}
 w_0x_0&&y_0&&z_0\\
 w_0x_1&&y_1&&z_1\\
 w_0x_2&&y_2&&z_2\end{pmatrix}
= \det\begin{pmatrix}
 w_0x_0+w_1y_0&&y_0&&z_0\\
 w_0x_1+w_1y_1&&y_1&&z_1\\
 w_0x_2+w_1y_2&&y_2&&z_2\end{pmatrix}=\\
 =&\det\begin{pmatrix}
 w_0x_0+w_1y_0+w_2z_0&&y_0&&z_0\\
 w_0x_1+w_1y_1+w_2z_1&&y_1&&z_1\\
 w_0x_2+w_1y_2+w_2z_2&&y_2&&z_2\end{pmatrix}=d\cdot\det\begin{pmatrix}
 1&&y_0&&z_0\\
 1&&y_1&&z_1\\
 1&&y_2&&z_2\end{pmatrix}\in d\Z
\end{align*}
Thus $\det(A)\in\frac{d}{w_0}\Z$. A similar argument using $w_1$ instead of $w_0$ shows $\det(A)\in\frac{d}{w_1}\Z$. Since $\gcd(w_0,w_1)=1$, it follows that $\det(A)\in d\Z$.
 \end{proof}
 
 \begin{theorem}
 \normalfont
 \label{minor-d}
Let $Q=(w_0,w_1,w_2,d)$ be a good $g$-quadruple, with its associated polytope $P$ and $n(P)\times 3$ matrix $M(P)$. Let $\Delta$ be the distinguished triangle corresponding to $Q$, as defined in the beginning of the section. Let $v_1,v_2,v_3$ be any three lattice points in $\Delta(P)$ which form a triangle that doesn't contain any other lattice points, either on the boundary or in the interior. Then the corresponding $3\times 3$ minor $\begin{pmatrix}
-&v_1&-\\
-&v_2&-\\
-&v_3&-\end{pmatrix}$ of $M(P)$ has determinant $\pm d$.
 \end{theorem}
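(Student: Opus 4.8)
The plan is to read the $3\times3$ determinant as $\pm d$ times a lattice index and then to use emptiness of the triangle to force that index to be $1$. Throughout, write $\phi\colon\Z^3\to\Z$ for the linear form $\phi(a,b,c)=w_0a+w_1b+w_2c$, and let $\Lambda_0=\ker\phi$, a rank-$2$ lattice. Because the weights are pairwise coprime we have $\gcd(w_0,w_1,w_2)=1$, so $\phi$ is surjective and we have an exact sequence $0\to\Lambda_0\to\Z^3\xrightarrow{\phi}\Z\to0$. I would fix $v_0\in\Z^3$ with $\phi(v_0)=1$ together with a $\Z$-basis $e_1,e_2$ of $\Lambda_0$; then $\{v_0,e_1,e_2\}$ is a $\Z$-basis of $\Z^3$, so $\det(v_0,e_1,e_2)=\pm1$.

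The first step is a determinant identity. Since $v_1,v_2,v_3\in P\subset H$ we have $\phi(v_i)=d$, so the edge vectors $u=v_2-v_1$ and $w=v_3-v_1$ lie in $\Lambda_0$. Column/row operations give $\det(v_1,v_2,v_3)=\det(v_1,u,w)$. As $u,w$ are linearly independent they span the plane $\Lambda_0\otimes\R$, and $v_1\equiv d\,v_0$ modulo that plane, so multilinearity yields $\det(v_1,u,w)=d\cdot\det(v_0,u,w)$. Expressing $u,w$ in the basis $e_1,e_2$ shows $\det(v_0,u,w)=\pm[\Lambda_0:\langle u,w\rangle]\cdot\det(v_0,e_1,e_2)$, hence $\det(v_1,v_2,v_3)=\pm d\cdot m$ with $m=[\Lambda_0:\langle u,w\rangle]$. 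In passing this reproves Lemma \ref{all-minors-div-d}, and it reduces the theorem to the claim $m=1$.

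The crux, and the one genuinely geometric step, is that an empty lattice triangle is unimodular, i.e.\ $m=1$. I would argue by contradiction: if $m>1$, the half-open fundamental parallelogram $\{su+tw:0\le s,t<1\}$ contains a lattice point $p=su+tw\in\Lambda_0$ with $(s,t)\neq(0,0)$. If $s+t\le1$, then $v_1+p$ is a lattice point of the closed triangle distinct from all three vertices; if instead $s+t>1$, then $p'=(1-s)u+(1-t)w\in\Lambda_0$ has barycentric coordinates in $(0,1)$ summing to less than $1$, so $v_1+p'$ is an interior lattice point. In either case the triangle contains a lattice point other than its vertices, contradicting the hypothesis, so $m=1$ and $\det(v_1,v_2,v_3)=\pm d$. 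The remaining steps are pure bookkeeping with the exact sequence above; the substantive content is this unimodularity fact, which one may alternatively deduce from the lattice form of Pick's theorem once one records that the covolume of $\Lambda_0$ equals $\sqrt{w_0^2+w_1^2+w_2^2}$.
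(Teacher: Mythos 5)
Your proposal is correct, and it takes a genuinely different and substantially more efficient route than the paper. The paper proves Theorem \ref{minor-d} by a four-way case analysis according to which weights divide $d$: in each case it writes the distinguished triangle $\Delta$ explicitly, computes $\det(\Delta)$ in terms of auxiliary parameters, massages the genus formula (\ref{genus-formula}) to show $\det(\Delta)$ equals exactly $d$ times the number of triangles in an empty triangulation of $\Delta$ (counted via Lemma \ref{triangulation}), and then invokes the divisibility Lemma \ref{all-minors-div-d} to force each small minor to be exactly $\pm d$ by a counting contradiction. You instead prove the clean structural fact directly: splitting $0\to\Lambda_0\to\Z^3\xrightarrow{\phi}\Z\to 0$ (valid since pairwise coprimality gives $\gcd(w_0,w_1,w_2)=1$) yields $\det(v_1,v_2,v_3)=\pm d\,[\Lambda_0:\langle v_2-v_1,v_3-v_1\rangle]$, and your fundamental-parallelogram argument correctly establishes the two-dimensional fact that an empty lattice triangle is unimodular (the case split $s+t\le 1$ versus $s+t>1$, with the reflection $p'=u+w-p$, handles boundary and interior points exactly as the hypothesis requires; note also that emptiness with respect to $\Z^3$ is the same as emptiness in the $\Lambda_0$-torsor $H\cap\Z^3$, since the triangle lies in the first octant). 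What your approach buys: it is uniform (no cases), it never touches the genus formula or the combinatorics of the distinguished triangle, it subsumes Lemma \ref{all-minors-div-d} as the observation that the index is an integer, and it proves a strictly stronger statement --- the conclusion holds for \emph{any} empty lattice triangle in the plane $H$, with no reference to $\Delta$ or to $g$-goodness --- which is in fact all the Main Theorem's proof ever uses. What the paper's longer computation buys, by contrast, is a collection of explicit per-case identities (e.g.\ $\det(\Delta)=(2g+1)d$ when no weight divides $d$, or $2g=k(a-1)$ in case (b)(i)) tying the genus to the lattice data of $\Delta$; these have some independent interest but are not needed downstream. Your closing remark on the covolume $\sqrt{w_0^2+w_1^2+w_2^2}$ is a correct but inessential aside.
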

Before we prove Theorem \ref{minor-d}, we need an additional lemma:
 
\begin{lemma}\label{triangulation}
A triangle which contains $m$ lattice points in the interior and $n$ lattice points on the boundary (including the three vertices), where $n\geq 3$, can be divided into $2m+n-2$ smaller triangles with disjoint interiors, whose vertices are among the $m+n$ lattice points, such that any small triangle contains no other lattice points other than its vertices.
 \end{lemma}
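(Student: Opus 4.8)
The plan is to reduce the entire statement to an area computation via Pick's theorem, so that the number $2m+n-2$ falls out automatically once a suitable subdivision is shown to exist. First I would recall Pick's theorem: a lattice polygon with $I$ interior lattice points and $B$ boundary lattice points has area $I+\frac{B}{2}-1$. Applied to our triangle, whose interior contains $m$ lattice points and whose boundary contains $n$ of them (the three vertices included), this gives total area $m+\frac{n}{2}-1$. The hypothesis $n\geq 3$ guarantees the triangle is nondegenerate, so Pick's theorem genuinely applies.

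The second ingredient is that a lattice triangle containing no lattice points other than its three vertices --- call such a triangle \emph{empty} --- has area exactly $\frac{1}{2}$: here $I=0$ and $B=3$, so Pick's formula yields $0+\frac{3}{2}-1=\frac{1}{2}$. Consequently, if the big triangle can be subdivided into $T$ empty lattice triangles whose vertices lie among the $m+n$ given lattice points, then summing areas over the subdivision gives $\frac{T}{2}=m+\frac{n}{2}-1$, hence $T=2m+n-2$, which is precisely the claimed count. Thus the combinatorial assertion is equivalent to the existence of such a subdivision.

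So the real content lies in constructing the subdivision. I would proceed as follows. Let $S$ be the set of all $m+n$ lattice points (interior and boundary), and start with the three edges of the triangle subdivided at the boundary points of $S$. Among all line segments with endpoints in $S$ that lie inside the triangle and pairwise intersect only at shared endpoints, choose a \emph{maximal} such collection; this exists because there are only finitely many candidate segments. I then claim its bounded faces are exactly the desired empty lattice triangles. Maximality is the crux: if some face contained a point of $S$ in its interior or relative interior of an edge, or if some face were a polygon with more than three sides, one could add a further non-crossing segment joining two points of $S$, contradicting maximality. This forces every face to be a triangle whose only lattice points are its vertices, all drawn from $S$.

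The step I expect to be the main obstacle is making this maximality argument fully rigorous --- specifically, showing that any such defect (an extra lattice point inside a face, or a face with four or more sides) really does permit a new non-crossing segment. This requires a visibility or convexity argument inside each face: from an interior lattice point or a non-adjacent vertex one must exhibit a segment to another point of $S$ meeting no existing segment except at endpoints. Once this visibility claim is established, the area bookkeeping of the first two paragraphs completes the proof with no further computation.
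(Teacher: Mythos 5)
Your route is sound but genuinely different from the paper's. The paper proves the lemma by a direct two-stage induction: first it triangulates using only the $n$ boundary points (repeatedly joining a lattice point interior to an edge to the opposite vertex, giving $n-2$ triangles), then it inserts the $m$ interior points one at a time, observing that each new point either lies inside one small triangle (connect to its three vertices: one triangle becomes three) or on a common edge of two (connect to the two opposite vertices: two become four), so each insertion adds exactly $2$ triangles, yielding $2m+n-2$. Your argument instead splits the statement into existence (a maximal non-crossing family of segments on the $m+n$ points has only empty triangular faces) and counting (Pick's theorem forces any subdivision into empty lattice triangles, each of area $\frac{1}{2}$, to have exactly $2m+\frac{n}{2}\cdot 2-2=2m+n-2$ pieces). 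The Pick route buys something the paper's proof does not even assert: the count $2m+n-2$ is independent of the triangulation chosen, and your use of the fact that the set $S$ exhausts \emph{all} lattice points of the triangle correctly converts ``empty with respect to $S$'' into ``empty of lattice points.'' The cost is the maximality step you flag yourself: to make it rigorous you need that every face of a maximal arrangement is a simple polygon all of whose points of $S$ are vertices, and that any polygonal face with four or more sides admits a diagonal --- the standard ``every simple polygon has a diagonal'' lemma, plus the observation that a point of $S$ interior to a face can be joined to some visible vertex of that face. These are classical facts and can be cited or proved in a few lines, but note that the paper's elementary insertion argument sidesteps them entirely; conversely, you could graft the paper's construction onto your Pick computation to get existence cheaply while retaining the uniqueness-of-count bonus. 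As written, your proof is acceptable once the visibility lemma is discharged; it is not a gap in the plan so much as a standard lemma left to quote.
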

 
 \begin{proof}[Proof of lemma]
 We start with the boundary points and proceed by induction. If $n=3$ and $m=0$, we have just one triangle. For $n>3$, pick a lattice point in the interior of an edge and connect it to the opposite vertex, obtaining two triangles with disjoint interiors, say one with $l$ boundary lattice points, one with $n-l+2$ boundary lattice points. By induction, we can further divide these into $(l-2)+(n-l)=n-2$ triangles.\\
 
\begin{figure}[H]
\centering
\begin{tikzpicture}
\node[circle,fill=black,inner sep=1.2pt,draw] (a) at (0,0) {};
\node[circle,fill=black,inner sep=1.2pt,draw] (b) at (2,0) {};
\node[circle,fill=black,inner sep=1.2pt,draw] (c) at (4,0) {};
\node[circle,fill=black,inner sep=1.2pt,draw] (d) at (1,2) {};
\node[circle,fill=black,inner sep=1.2pt,draw] (e) at (.5,1) {};
\node[circle,fill=black,inner sep=1.2pt,draw] (f) at (3.4,.4) {};
\node[circle,fill=black,inner sep=1.2pt,draw] (g) at (2.8,.8) {};
\node[circle,fill=black,inner sep=1.2pt,draw] (h) at (1.6,1.6) {};

\draw (a)--(b)--(c)--(f) --(g)--(h)--(d)--(e)--(a);
\draw (e)--(b)--(f);
\draw (b)--(g);
\draw (b)--(h);
\draw (b)--(d);
\end{tikzpicture}
\caption{Case m=0, n=8}
\end{figure}
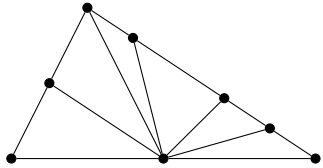

For the interior lattice points we have a similar stategy, starting from the above triangulation. For each interior lattice point, we are in one of two cases: either the point is in the interior of one of the smaller triangles, or it falls on the boundary of exactly two smaller triangles.

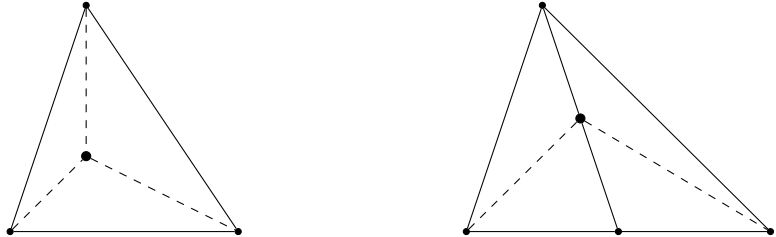
\begin{figure}[H]
\centering
\begin{tikzpicture}
\node[circle,fill=black,inner sep=0.8pt,draw] (a) at (0,0) {};
\node[circle,fill=black,inner sep=0.8pt,draw] (b) at (3,0) {};
\node[circle,fill=black,inner sep=0.8pt,draw] (c) at (1,3) {};
\node[circle,fill=black,inner sep=1.2pt,draw] (d) at (1,1) {};

\node[circle,fill=black,inner sep=0.8pt,draw] (e) at (6,0) {};
\node[circle,fill=black,inner sep=0.8pt,draw] (f) at (10,0) {};
\node[circle,fill=black,inner sep=0.8pt,draw] (g) at (7,3) {};
\node[circle,fill=black,inner sep=0.8pt,draw] (h) at (8,0) {};
\node[circle,fill=black,inner sep=1.2pt,draw] (i) at (7.5,1.5) {};

\draw (a)--(b)--(c)--(a);
\draw[dashed] (a)--(d)--(b);
\draw[dashed](c)--(d);

\draw (e)--(h)--(f)--(g)--(e);
\draw (g)--(i)--(h);
\draw[dashed](e)--(i)--(f);

\end{tikzpicture}
\caption{Two possible cases for m=1}
\end{figure}

In the first case, we connect that interior lattice point with the three vertices of the triangle, from one triangle obtaining three. In the second case, we connect the interior lattice point with the opposite vertices, from two triangles obtaining four. By induction, after we add $m$ interior points, we get $2m$ more triangles, so the conclusion follows.
 \end{proof}

 \begin{proof}[Proof of Theorem \ref{minor-d}]
 Consider $\Delta$ the distinguished triangle as described in the beginning of this section. We will make a case-by-case analysis of the number of boundary lattice points and interior lattice points of $\Delta$. We have the following cases:
 \begin{enumerate}
 
\item[(a)] None of the weights divide $d$. Up to a permutation of the weights, the distinguished triangle corresponds to one of these matrices:
$$(i)\begin{pmatrix}
a&&1&&0\\
0&&b&&1\\
1&&0&&c\end{pmatrix}\text{ or }
(ii)\begin{pmatrix}
a&&1&&0\\
0&&b&&1\\
0&&1&&c\end{pmatrix}.$$

\item[(i)] Say the distinguished triangle is $\begin{pmatrix}
a&&1&&0\\
0&&b&&1\\
1&&0&&c\end{pmatrix}.$ Then $\det(\Delta)=abc+1$, and $a=\frac{d-w_1}{w_0}, b=\frac{d-w_2}{w_1}, c=\frac{d-w_0}{w_2}.$

$$\begin{tikzpicture}
\node[circle,fill=black,inner sep=0.8pt,draw] (a) at (0,0) {};
\node[circle,fill=black,inner sep=0.8pt,draw] (b) at (6,0) {};
\node[circle,fill=black,inner sep=0.8pt,draw] (c) at (3,4.2) {};
\node[circle,fill=black,inner sep=1.2pt,draw] (d) at (.5,0) {};
\node[circle,fill=black,inner sep=1.2pt,draw] (e) at (5.7,.42) {};
\node[circle,fill=black,inner sep=1.2pt,draw] (f) at (2.7,3.78) {};

\node () at (-0.7,0) {\tiny$(\frac{d}{w_0},0,0)$};
\node () at (6.7,0) {\tiny$(0,\frac{d}{w_1},0)$};
\node () at (3.3,4.4) {\tiny$(0,0,\frac{d}{w_2})$};
\node () at (0.5,-0.3) {\tiny$(a,1,0)$};
\node () at (6.3,.42) {\tiny$(0,b,1)$};
\node () at (2.1,3.78) {\tiny$(1,0,c)$};

\draw (a)--(b)--(c)--(a);
\draw (d)--(e)--(f)--(d);
\end{tikzpicture}$$

The triangle $\Delta$ has no lattice points on the boundary other than the three vertices, but it has $g$ interior lattice points, by Lemma \ref{g-int-pts}. Triangulating using Lemma \ref{triangulation}, we get $2g+1$ smaller triangles with disjoint interiors, whose union is $\Delta$. All these triangles form, together with the origin, small tetrahedrons of volume equal to positive multiples of $\frac{d}{2}$, by Lemma \ref{all-minors-div-d}. It follows that $\Delta$ together with the origin forms a tetrahedron of volume at least $\frac{(2g+1)d}{2}$, i.e. $\det(\Delta)\geq (2g+1)d$.

Assume by contradiction that one of the small tetrahedrons has volume at least $d$. Then we must have that:
\begin{align*}
\det(\Delta)=abc+1 &> d(2g+1)\\
\frac{(d-w_0)(d-w_1)(d-w_2)}{w_0w_1w_2}+1 &> d(2g+1)\\
d(d-\sum w_i)+\sum w_iw_j &> w_0w_1w_2(2g+1)
\end{align*}
On the other hand, the genus formula (\ref{genus-formula}) gives:
$$d(d-\sum w_i)+\sum w_iw_j =w_0w_1w_2(2g+1),$$

so we reached a contradiction. All the small tetrahedrons must have volume exactly $\frac{d}{2}$, i.e. the determinants corresponding the associated $3\times 3$ minors of $M(P)$ equal $\pm d$.

\item[(ii)] Say the distinguished triangle is $\begin{pmatrix}
a&&1&&0\\
0&&b&&1\\
0&&1&&c\end{pmatrix}.$
Then $\det(\Delta)=a(bc-1)$. Since the weights are pairwise comprime, there exist $k,l\in\Z_{>0}$ such that
\begin{align*}
a=&lw_2\\
b=&1+kw_2\\
c=&1+kw_1=lw_0\\
d=&kw_1w_2+w_1+w_2=lw_0w_2+w_1.
\end{align*}
The genus formula (\ref{genus-formula}) gives 
\begin{align*}
(2g+1)w_0w_1w_2&=d(d-\sum w_i)+\sum w_iw_j\\
(2g+1)w_0w_1w_2&=d(kw_1w_2-w_0)+\sum w_iw_j\\
(2g+1)w_0w_1w_2&=dkw_1w_2-w_0(d-w_1-w_2)+w_1w_2\\
(2g+1)w_0w_1w_2&=dkw_1w_2-kw_0w_1w_2+w_1w_2\\
(2g+1)w_0&=dk-kw_0+1\\
(2g+1)w_0&=(aw_0+w_1)k-kw_0+1\\
(2g+1)w_0&=aw_0+(w_1k+1)-w_0k\\
(2g+1)w_0&=aw_0+lw_0-w_0k\\
2g+1&=ak+l-k.
\end{align*}
Hence we have
$$\det(\Delta)=a(bc-1)=akd=(2g+1+k-l)d.$$

$$\begin{tikzpicture}[level/.style={},decoration={brace,mirror,amplitude=7}][H]
\node[circle,fill=black,inner sep=0.8pt,draw] (a) at (0,0) {};
\node[circle,fill=black,inner sep=0.8pt,draw] (b) at (6,0) {};
\node[circle,fill=black,inner sep=0.8pt,draw] (c) at (3,4.2) {};
\node[circle,fill=black,inner sep=1.2pt,draw] (d) at (.5,0) {};
\node[circle,fill=black,inner sep=1.2pt,draw] (e) at (5.7,.42) {};
\node[circle,fill=black,inner sep=1.2pt,draw] (f) at (3.3,3.78) {};
\node[circle,fill=black,inner sep=1.2pt,draw] (h) at (2.8333,3.15) {};
\node[circle,fill=black,inner sep=1.2pt,draw] (i) at (0.9666,0.63) {};
\node[circle,fill=black,inner sep=1.2pt,draw] (j) at (5.3, 0.98) {};
\node[circle,fill=black,inner sep=1.2pt,draw] (k) at (3.7, 3.22) {};

\node () at (-0.7,0) {\tiny$(\frac{d}{w_0},0,0)$};
\node () at (6.7,0) {\tiny$(0,\frac{d}{w_1},0)$};
\node () at (3.3,4.4) {\tiny$(0,0,\frac{d}{w_2})$};
\node () at (0.5,-0.3) {\tiny$(a,1,0)$};
\node () at (6.3,.42) {\tiny$(0,b,1)$};
\node () at (4.1,3.78) {\tiny$(0,1,c)$};

\draw (a)--(b)--(c)--(a);
\draw (d)--(e)--(f)--(d);
\draw [decorate] ([xshift=2mm]e.west) --node[xshift=8mm][yshift=1mm]{$k+1$} ([xshift=1mm]f.east);
\draw [decorate] ([xshift=1mm]d.west) --node[xshift=8mm][yshift=-2mm]{$l+1$} ([xshift=-1mm]f.east);
\end{tikzpicture}$$

The distinguished triangle $\Delta$ has $k+l+1$ lattice points on the boundary, and $g-l+1$ lattice points in the interior, so we can triangulate $\Delta$ into $2(g-l+1)+k+l-1=2g+1+k-l$ smaller triangles.\\
Assume by contradiction that at least one small tetrahedron has volume $\geq d$, then 
$$\det(\Delta)>d(2g+1+k-l),$$
which is clearly a contradiction.

 \item[(b)] Only one weight divides $d$. Up to a permutation of the weights, the three possible cases are the following:
 $$(i)\begin{pmatrix}
 a&&0&&0\\
 0&&b&&1\\
 0&&1&&c\end{pmatrix}\text{ or }
 (ii)\begin{pmatrix}
 a&&0&&0\\
 1&&b&&0\\
 0&&1&&c\end{pmatrix}\text{ or }
(iii) \begin{pmatrix}
 a&&0&&0\\
 1&&b&&0\\
 1&&0&&c\end{pmatrix}.$$

 \item[(i)] If the distinguished triangle is $\begin{pmatrix}
 a&&0&&0\\
 0&&b&&1\\
 0&&1&&c\end{pmatrix},$
there exists $k\in\Z$ such that:
\begin{align*}
&b=kw_2+1\\
&c=kw_1+1\\
&d=aw_0=kw_1w_2+w_1+w_2\\
&\det(\Delta)=akd
\end{align*}
 The genus formula (\ref{genus-formula}) gives:
 $$g=\frac{1}{2}(\frac{d(d-w_0-w_1-w_2)}{w_0w_1w_2}+1+\frac{1}{w_1}+\frac{1}{w_2}-1).$$
 Rewriting this equality and using the information above, we get:
 \begin{align*}
 2gw_0w_1w_2=&d(d-\sum_i w_i)+w_0w_1+w_0w_2\\
2gw_1w_2=&a(kw_1w_2-w_0)+w_1+w_2\\
2gw_1w_2=&akw_1w_2-d+w_1+w_2\\
2gw_1w_2=&akw_1w_2-kw_1w_2\\
2g=&k(a-1)\\
a=&\frac{2g+k}{k}.
\end{align*}
 
$$\begin{tikzpicture}[level/.style={},decoration={brace,mirror,amplitude=7}][H]
\node[circle,fill=black,inner sep=0.8pt,draw] (a) at (0,0) {};
\node[circle,fill=black,inner sep=0.8pt,draw] (b) at (6,0) {};
\node[circle,fill=black,inner sep=0.8pt,draw] (c) at (3,4.2) {};
\node[circle,fill=black,inner sep=1.2pt,draw] (e) at (5.7,.42) {};
\node[circle,fill=black,inner sep=1.2pt,draw] (f) at (3.3,3.78) {};
\node[circle,fill=black,inner sep=1.2pt,draw] (j) at (5.3, 0.98) {};
\node[circle,fill=black,inner sep=1.2pt,draw] (k) at (3.7, 3.22) {};

\node () at (-0.7,0) {\tiny$(a,0,0)$};
\node () at (6.7,0) {\tiny$(0,\frac{d}{w_1},0)$};
\node () at (3.3,4.4) {\tiny$(0,0,\frac{d}{w_2})$};
\node () at (6.3,.42) {\tiny$(0,b,1)$};
\node () at (4.1,3.78) {\tiny$(1,0,c)$};

\draw (a)--(b)--(c)--(a);
\draw (e)--(a)--(f);
\draw [decorate] ([xshift=2mm]e.west) --node[xshift=8mm][yshift=1mm]{$k+1$} ([xshift=1mm]f.east);
\end{tikzpicture}$$
 
 The triangle $\Delta$ has $g$ interior lattice points and $k+2$ boundary lattice points, so by Lemma \ref{triangulation}, we can partition it into $2g+k$ triangles. Assume by contradiction one small tetrahedron has volume at least $d$, so $\det(\Delta)>(2g+k)d$. On the other hand,
 $$\det(\Delta)=akd=\frac{2g+k}{k}kd=(2g+k)d,$$
 so we reached a contradiction again.
 
 \item[(ii)]
 If the distinguished triangle is $\begin{pmatrix}
a&&0&&0\\
1&&b&&0\\
0&&1&&c\end{pmatrix},$
 there exists $k\in\Z$ such that:
 \begin{align*}
  &a=1+kw_1\\
  &b=kw_0\\
  &d=kw_0w_1+w_0=cw_2+w_1\\
  &\det(\Delta)=abc=kdc
  \end{align*}
 Again, we know the genus is given by (\ref{genus-formula}):
 $$g=\frac{1}{2}(\frac{d(d-w_0-w_1-w_2)}{w_0w_1w_2}+1+\frac{1}{w_1}+\frac{1}{w_2}-1).$$
 Rewriting this formula and using the information above, we obtain:

\begin{align*}
2gw_0w_1w_2=&d(d-\sum_i w_i)+w_0w_1+w_0w_2\\
2gw_0w_1w_2=&aw_0(kw_0w_1+w_0-\sum_i w_i)+w_0w_1+w_0w_2\\
2gw_1w_2=&a(kw_0w_1-w_1-w_2)+w_1+w_2\\
2gw_1w_2=&(kw_1+1)(kw_0w_1-w_1-w_2)+w_1+w_2\\
2gw_1w_2=&kw_1(kw_0w_1-w_1-w_2)+kw_0w_1\\
2gw_2=&k(kw_0w_1+w_0-w_1-w_2)\\
2gw_2=&k(d-w_1-w_2)\\
2gw_2=&k(c-1)w_2\\
2g=&k(c-1)\\
c=&\frac{2g+k}{k}.
\end{align*}

$$\begin{tikzpicture}[level/.style={},decoration={brace,mirror,amplitude=7}][H]
\node[circle,fill=black,inner sep=0.8pt,draw] (a) at (0,0) {};
\node[circle,fill=black,inner sep=0.8pt,draw] (b) at (6,0) {};
\node[circle,fill=black,inner sep=0.8pt,draw] (c) at (3,4.2) {};
\node[circle,fill=black,inner sep=1.2pt,draw] (e) at (5.5,0) {};
\node[circle,fill=black,inner sep=1.2pt,draw] (f) at (3.3,3.78) {};
\node[circle,fill=black,inner sep=1.2pt,draw] (j) at (0.5, 0) {};
\node[circle,fill=black,inner sep=1.2pt,draw] (k) at (5,0) {};

\node () at (-0.7,0) {\tiny$(a,0,0)$};
\node () at (6.7,0) {\tiny$(0,\frac{d}{w_1},0)$};
\node () at (3.3,4.4) {\tiny$(0,0,\frac{d}{w_2})$};
\node () at (5.5,-0.4) {\tiny$(0,b,1)$};
\node () at (4.1,3.78) {\tiny$(1,0,c)$};

\draw (a)--(b)--(c)--(a);
\draw (e)--(a)--(f)--(e);
\draw [decorate] ([yshift=-1mm]a.west) --node[yshift=-5mm][yshift=1mm]{$k+1$} ([yshift=-1mm]e.east);
\end{tikzpicture}$$
 
 The triangle $\Delta$ has $k+2$ boundary lattice points and $g$ interior lattice points. By Lemma \ref{triangulation}, we can partition $\Delta$ into $2g+k$ triangles. Assume by contradiction at least one small tetrahedron has volume at least $d$, therefore $\det(\Delta)>(2g+k)d$. On the other hand:
 $$\det(\Delta)=kdc=kd\frac{2g+k}{k}=(2g+k)d,$$
 so we reached a contradiction.
 
 \item[(iii)] If the distinguished triangle is $\begin{pmatrix}
 a&&0&&0\\
 1&&b&&0\\
 1&&0&&c\end{pmatrix},$ there exists $k\in\Z$ such that:
 
 \begin{align*}
 &a=1+kw_1w_2\\
 &b=kw_0w_2\\
 &c=kw_0w_1\\
 &d=kw_0w_1w_2+w_0\\
 &\det(\Delta)=abc=kw_0w_1w_2d=d(d-w_0)
 \end{align*}
 
In this case, the genus formula gives us:
\begin{align*}
2gw_0w_1w_2=&d(d-\sum w_i)+w_0w_1+w_0w_2\\
2gw_0w_1w_2=&aw_0(d-\sum w_i)+w_0w_1+w_0w_2\\
2gw_1w_2=&(kw_1w_2+1)(kw_0w_1w_2-w_1-w_2)+w_1+w_2\\
2g=&k(kw_0w_1w_2+w_0-w_0-w_1)\\
2g=&k(d-w_1-w_2).
\end{align*}

$$\begin{tikzpicture}[level/.style={},decoration={brace,mirror,amplitude=7}][H]
\node[circle,fill=black,inner sep=0.8pt,draw] (a) at (0,0) {};
\node[circle,fill=black,inner sep=0.8pt,draw] (b) at (6,0) {};
\node[circle,fill=black,inner sep=0.8pt,draw] (c) at (3,4.2) {};
\node[circle,fill=black,inner sep=1.2pt,draw] (e) at (5.5,0) {};
\node[circle,fill=black,inner sep=1.2pt,draw] (f) at (2.7,3.78) {};
\node[circle,fill=black,inner sep=1.2pt,draw] (g) at (0.5, 0) {};
\node[circle,fill=black,inner sep=1.2pt,draw] (h) at (5,0) {};
\node[circle,fill=black,inner sep=1.2pt,draw] (i) at (0.45,0.63) {};
\node[circle,fill=black,inner sep=1.2pt,draw] (j) at (2.25,3.15) {};
\node[circle,fill=black,inner sep=1.2pt,draw] (k) at (5.0333,0.63) {};
\node[circle,fill=black,inner sep=1.2pt,draw] (l) at (3.1666,3.15) {};

\node () at (-0.7,0) {\tiny$(a,0,0)$};
\node () at (6.7,0) {\tiny$(0,\frac{d}{w_1},0)$};
\node () at (3.3,4.4) {\tiny$(0,0,\frac{d}{w_2})$};
\node () at (5.5,-0.4) {\tiny$(0,b,1)$};
\node () at (1.9,3.78) {\tiny$(1,0,c)$};

\draw (a)--(b)--(c)--(a);
\draw (e)--(a)--(f)--(e);
\draw [decorate] ([yshift=-1mm]a.west) --node[yshift=-4mm]{$kw_2+1$} ([yshift=-1mm]e.east);
\draw [decorate] ([xshift=-1mm]f.west) --node[xshift=-10mm][yshift=3mm]{$kw_0+1$}([xshift=-2mm]a.east);
\draw [decorate] ([yshift=-1mm]f.west) --node[xshift=-10mm][yshift=-2mm]{$kw_1+1$}([xshift=-2mm]e.east);
\end{tikzpicture}$$

There are $k(w_0+w_1+w_2)$ lattice points on the boundary of $\Delta$ and $g-kw_0+1$ interior lattice points, so we can triangulate $\Delta$ into $2(g-kw_0+1)+k(w_0+w_1+w_2)-2=2g-kw_0+kw_1+kw_2=k(d-w_0)$ smaller triangles.

If one small tetrahedron has volume at least $d$, then 
$$dk(d-w_0)<d(d-w_0),$$
which is clearly a contradiction.

 \item[(c)] Two weights divide $d$. Up to a permutation of the weights, the distinguished triangle $\Delta$ is 
 $$\begin{pmatrix}
 a&&0&&0\\
 0&&b&&0\\
 1&&0&&c\end{pmatrix}.$$
 In this case there exist $k,l\in\Z$ such that:
 \begin{align*}
 &a=kw_1=lw_2+1\\
 &b=kw_0\\
 &c=lw_0\\
 &d=kw_0w_1=lw_0w_2+w_0\\
 &\det(\Delta)=abc=kdc=bld
 \end{align*}
 
 The genus formula (\ref{genus-formula}) gives:
 \begin{align*}
 2g=&\frac{d(d-\sum w_i)}{w_0w_1w_2}+\frac{1}{w_2}+1\\
 (2g-1)w_0w_1w_2=&d(d-\sum w_i)+w_0w_1\\
 (2g-1)w_0w_1w_2=&kw_0w_1(d-\sum w_i)+w_0w_1\\
 (2g-1)w_2=&k(d-\sum w_i)+1\\
 (2g-1)w_2=&k(lw_0w_2-w_1-w_2)+1\\
 (2g-1)w_2=&klw_0w_2-kw_1-kw_2+1\\
 (2g-1)w_2=&klw_0w_2-lw_2-kw_2\\
 2g-1=&klw_0-l-k\\
 2g+l+k-1=&klw_0.
 \end{align*}

$$\begin{tikzpicture}[level/.style={},decoration={brace,mirror,amplitude=7}][H]
\node[circle,fill=black,inner sep=0.8pt,draw] (a) at (0,0) {};
\node[circle,fill=black,inner sep=0.8pt,draw] (b) at (6,0) {};
\node[circle,fill=black,inner sep=0.8pt,draw] (c) at (3,4.2) {};
\node[circle,fill=black,inner sep=1.2pt,draw] (d) at (2.7,3.78) {};

\node[circle,fill=black,inner sep=1.2pt,draw] (g) at (0.5, 0) {};
\node[circle,fill=black,inner sep=1.2pt,draw] (h) at (5.5,0) {};
\node[circle,fill=black,inner sep=1.2pt,draw] (i) at (0.45,0.63) {};
\node[circle,fill=black,inner sep=1.2pt,draw] (j) at (2.25,3.15) {};

\node () at (-0.7,0) {\tiny$(a,0,0)$};
\node () at (6.7,0) {\tiny$(0,b,0)$};
\node () at (3.3,4.4) {\tiny$(0,0,\frac{d}{w_2})$};
\node () at (1.9,3.78) {\tiny$(1,0,c)$};

\draw (a)--(b)--(c)--(a);
\draw (d)--(b);
\draw [decorate] ([yshift=-1mm]a.west) --node[yshift=-4mm]{$k+1$} ([yshift=-1mm]b.east);
\draw [decorate] ([xshift=-1mm]f.west) --node[xshift=-7mm][yshift=3mm]{$l+1$}([xshift=-2mm]a.east);

\end{tikzpicture}$$

 This distinguished triangle has $k+l+1$ boundary lattice points and $g$ interior lattice points, so we can triangulate it into $2g+l+k-1$ small triangles. If one small tetrahedron has volume at least $d$, then we should have
 \begin{align*}
 d(2g+l+k-1)< &\det(\Delta)=kdc\\
 2(2g+l+k-1)<& kc\\
 2klw_0<& klw_0,
 \end{align*}
 which is again a contradiction.
 
 \item[(d)] All weights divide $d$. In this case, the distinguished minor $\Delta$ is $\begin{pmatrix}
 a&0&0\\
 0&b&0\\
 0&0&c\end{pmatrix}.$
 
There exists an integer $k>0$ such that:
\begin{align*}
&a=kw_1w_2\\
&b=kw_0w_2\\
&c=kw_0w_1\\
&d=kw_0w_1w_2\\
&\det(\Delta)=k^3w_0^2w_1^2w_2^2=kd^2
\end{align*}
The genus formula (\ref{genus-formula}) gives us the following:
\begin{align*}
2g-2=&\frac{d(d-\sum w_i)}{w_0w_1w_2}\\
(2g-2)w_0w_1w_2=&kw_0w_1w_2(d-\sum w_i)\\
2g-2+\sum kw_i=&kd.
 \end{align*}
 
 $$\begin{tikzpicture}[level/.style={},decoration={brace,mirror,amplitude=7}][H]
\node[circle,fill=black,inner sep=0.8pt,draw] (a) at (0,0) {};
\node[circle,fill=black,inner sep=0.8pt,draw] (b) at (6,0) {};
\node[circle,fill=black,inner sep=0.8pt,draw] (c) at (3,4.2) {};
\node[circle,fill=black,inner sep=1.2pt,draw] (d) at (2.7,3.78) {};
\node[circle,fill=black,inner sep=1.2pt,draw] (g) at (0.5, 0) {};
\node[circle,fill=black,inner sep=1.2pt,draw] (h) at (5.5,0) {};
\node[circle,fill=black,inner sep=1.2pt,draw] (i) at (0.45,0.63) {};
\node[circle,fill=black,inner sep=1.2pt,draw] (j) at (5.5,0.7) {};
\node[circle,fill=black,inner sep=1.2pt,draw] (k) at (3.5,3.5) {};

\node () at (-0.9,-0.1) {\tiny$(kw_1w_2,0,0)$};
\node () at (6.9,-0.1) {\tiny$(0,kw_0w_2,0)$};
\node () at (3.3,4.4) {\tiny$(0,0,kw_0w_1)$};

\draw (a)--(b)--(c)--(a);

\draw [decorate] ([yshift=-1mm]a.west) --node[yshift=-4mm]{$kw_2+1$} ([yshift=-1mm]b.east);
\draw [decorate] ([xshift=-1mm]c.west) --node[xshift=-9mm][yshift=3mm]{$kw_1+1$}([xshift=-2mm]a.east);
\draw [decorate] ([xshift=2mm]b.west) --node[xshift=10mm][yshift=2mm]{$kw_0+1$}([xshift=1mm]c.east);
\end{tikzpicture}$$
 
 The distinguished triangle has $kw_0+kw_1+kw_2$ points on the boundary and $g$ interior points, hence by Lemma \ref{triangulation}, we can triangulate $\Delta$ into $2g+kw_0+kw_1+kw_2-2=kd$ smaller disjoint triangles. Each of these triangles together with the origin forms a small tetrahedrons of volume equal to a positive multiple of $\frac{d}{2}$. If any of these volumes is at least $d$, then
 $$\det(\Delta)>d\cdot kd=kd^2,$$
 which is a contraction, so we are done.
 \end{enumerate}
 \end{proof}

 \begin{lemma}
 Let $Q=(w_0,w_1,w_2,d)$ be a $g$-good quadruple, and let $M(P)$ the corresponding $n(P)\times 3$ matrix. Let $v_1,v_2,v_3$ be three lattice points in $P$ that form a minor whose determinant is $\pm d$. Then any row vector in $M(P)$ is an integer combination of $v_1,v_2,v_3$.
 \end{lemma}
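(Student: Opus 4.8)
The plan is to express an arbitrary row $w$ of $M(P)$ as a rational linear combination of $v_1, v_2, v_3$ and then argue that the rational coefficients are forced to be integers. Since $\det(v_1, v_2, v_3) = \pm d \neq 0$, the three vectors $v_1, v_2, v_3$ are linearly independent over $\R$ and hence form a basis of $\R^3$. Consequently, for any lattice point $w$ appearing as a row of $M(P)$, there exist unique $\alpha, \beta, \gamma \in \Q$ with $w = \alpha v_1 + \beta v_2 + \gamma v_3$; everything then reduces to showing $\alpha, \beta, \gamma \in \Z$.

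To carry this out I would invoke Cramer's rule. Solving the linear relation $w = \alpha v_1 + \beta v_2 + \gamma v_3$ yields
$$\alpha = \frac{\det(w, v_2, v_3)}{\det(v_1, v_2, v_3)}, \quad \beta = \frac{\det(v_1, w, v_3)}{\det(v_1, v_2, v_3)}, \quad \gamma = \frac{\det(v_1, v_2, w)}{\det(v_1, v_2, v_3)},$$
where in each case $\det(\cdots)$ denotes the determinant of the $3\times 3$ matrix whose \emph{rows} are the indicated vectors. The common denominator equals $\pm d$ by the hypothesis on $v_1, v_2, v_3$.

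The crucial point is that each numerator is itself a $3 \times 3$ minor of $M(P)$: the rows $w, v_2, v_3$ (respectively $v_1, w, v_3$ and $v_1, v_2, w$) are all rows of $M(P)$, since $w, v_1, v_2, v_3$ are lattice points of $P$ and $M(P)$ records every such lattice point. By Lemma \ref{all-minors-div-d}, every $3 \times 3$ minor of $M(P)$ is divisible by $d$, so each numerator lies in $d\Z$. Dividing a multiple of $d$ by $\pm d$ gives an integer, whence $\alpha, \beta, \gamma \in \Z$, which is exactly the desired conclusion.

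I expect no genuine obstacle here: the argument is a clean combination of Cramer's rule with the divisibility statement of Lemma \ref{all-minors-div-d}. The only point meriting a moment's care is verifying that the numerators really are minors of $M(P)$ — that is, that $w$ together with two of the $v_i$ always form three legitimate rows of the matrix — but this is immediate, since all four vectors are among the lattice points catalogued in $M(P)$.
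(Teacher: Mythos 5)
Your proposal is correct and is essentially the paper's own argument: the paper likewise writes a row $v$ as a unique real combination $\alpha_1 v_1 + \alpha_2 v_2 + \alpha_3 v_3$, observes that $\det(v_1, v_2, v) = \pm\alpha_3 d$ is a minor of $M(P)$ hence divisible by $d$ by Lemma \ref{all-minors-div-d}, and concludes $\alpha_3 \in \Z$ (handling the other coefficients by symmetry). Your explicit invocation of Cramer's rule is just the same computation spelled out for all three coefficients at once.
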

 
 \begin{proof}
 Let $v$ a row in $M(P)$. There exist unique $\alpha_i\in\R$ such that $v=\alpha_1v_1+\alpha_2v_2+\alpha_3v_3$. By symmetry, it suffices to show $a_3\in\Z$.
 We know
 $$\det\begin{pmatrix}
 -&&v_1&&-\\
 -&&v_2&&-\\
 -&&v&&-\end{pmatrix}=\pm\alpha_3d$$
is a multiple of $d$, by Lemma \ref{all-minors-div-d}, so $\alpha_3\in \Z$.
 \end{proof}

 As a consequence of the above lemma, we can associate to each $g$-good quadruple $Q=(w_0,w_1,w_2,d)$ a new polytope in $\Z^2$ as follows. Let $M(P)$ be the $n(P)\times 3$ matrix associated to the quadruple, and let $v_1,v_2,v_3$ be three row vectors in $M(P)$ whose associated determinant is $\pm d$. Consider the transformation to $\Z^2$:
 \begin{align*}
 \label{map-to-Z2}
v_1&\to e_1\\
v_2&\to e_2\\
v_3&\to 0.
\end{align*}
Since all the other rows in $M(P)$ are integer combinations of the $v_i$'s, the lattice points of $P$ are sent to lattice points in $\Z^2$. We call the new polytope $\overline{P}$.
The number $n=n(P)=n(\overline{P})$ is invariant under this transformation. Recall the number of interior points is always $g$.

 \begin{lemma}
 \label{unif-bound}
Fix $g\geq 1$. As $Q=(w_0,w_1,w_2,d)$ varies over all $g$-good quadruples, there exists a uniform bound on $n=n(P)$ as a function of $g$, namely $n\leq 3g+6$.
\end{lemma}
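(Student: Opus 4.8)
The plan is to reduce the statement to a purely planar question about $\overline{P}$ and then feed it into a sharp inequality relating the boundary and interior lattice points of a convex lattice polygon. The transformation $v_1\mapsto e_1,\ v_2\mapsto e_2,\ v_3\mapsto 0$ carries the lattice points of $P$ bijectively onto the lattice points of $\overline{P}$, since every row of $M(P)$ is an integral combination of $v_1,v_2,v_3$; hence $n=n(P)=n(\overline{P})$. By Lemma \ref{g-int-pts} the number of interior lattice points of $\overline{P}$ is exactly $g$, so writing $b$ for the number of boundary lattice points of $\overline{P}$ we get $n=g+b$. Thus everything comes down to bounding $b$ linearly in $g$, and the target reads $b\le 2g+6$.

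The main engine will be Scott's inequality for convex lattice polygons: any convex lattice polygon with $i\ge 1$ interior lattice points and $b$ boundary lattice points satisfies $b\le 2i+7$. Taking $i=g$ gives $b\le 2g+7$, hence $n\le 3g+7$ at once. To sharpen this I would exploit the rigid shape of $\overline{P}$. Because $P$ is the convex hull of the lattice points of the octant triangle $H\cap(\R_{\ge 0})^3$ with at most three corners truncated, namely the corners at the axes for which $w_i\nmid d$ (this is exactly the local "empty sliver" computation carried out in the proof of Lemma \ref{g-int-pts}), the polygon $\overline{P}$ has at most six vertices: a triangle with up to three cut corners. This bounded number of edges, together with the explicit distinguished triangles classified in Theorem \ref{minor-d}, is what should control the borderline configurations, since Scott's inequality attains equality $b=2i+7$ for a single polygon only.

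The step I expect to be the genuine obstacle is pinning the sharp constant, i.e.\ passing from $2g+7$ to $2g+6$. Scott's bound is attained solely by $3\Delta_1=\mathrm{conv}\{(0,0),(3,0),(0,3)\}$, with $i=1$ and $b=9$, and this polygon does arise: the quadruple $(1,1,1,3)$ of case (d) has $P$ equal to the full Newton triangle, and the change of coordinates of Theorem \ref{minor-d} sends it precisely to a copy of $3\Delta_1$, giving $n=10$. Away from this one classical configuration one has $b\le 2g+6$, hence $n\le 3g+6$, and the six-vertex description can be used to check that no other $g$-good quadruple produces an extremal polygon. For the finiteness theorem, however, the only property actually used downstream is that $n$ is bounded by a fixed linear function of $g$, so the precise constant plays no essential role in what follows.
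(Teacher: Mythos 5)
Your reduction and your main engine coincide with the paper's own proof, which is a one-line citation of \cite[4.5.6]{koelman}: the content behind that citation is precisely the Scott-type bound you invoke, namely that a convex lattice polygon with $i\geq 1$ interior lattice points has at most $2i+7$ boundary lattice points. Combined with Lemma \ref{g-int-pts} (the interior count of $\overline{P}$ is exactly $g$) and the lattice-point-preserving map $P\to\overline{P}$, this gives $n=g+b\leq 3g+7$ immediately, and up to that point your argument is correct and essentially identical in substance to the paper's.

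The problem is the sharpening from $3g+7$ to $3g+6$, where your own analysis refutes the stated constant rather than confirming it. As you observe, equality $b=2i+7$ holds only for $3\Delta=\operatorname{conv}\{(0,0),(3,0),(0,3)\}$, and this case is realized by a $g$-good quadruple: $Q=(1,1,1,3)$ (plane cubics) is $1$-good, its polytope $P$ is the full triangle with vertices $(3,0,0),(0,3,0),(0,0,3)$, and $n(P)=10>3\cdot 1+6=9$, with $\overline{P}$ lattice-equivalent to $3\Delta$. So the bound $n\leq 3g+6$ as stated in the lemma is literally false for this quadruple; the correct statement is $n\leq 3g+7$, with equality only when $\overline{P}\cong 3\Delta$ (forcing $g=1$), and the paper's citation of Koelman apparently drops this exceptional case. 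Your write-up is internally inconsistent on exactly this point: having exhibited the extremal example, you then assert $b\leq 2g+6$ ``away from this one classical configuration'' and propose a six-vertex check that no \emph{other} quadruple is extremal --- but uniqueness of the equality case in Scott's inequality already gives that for free, and no such check can rescue the constant $3g+6$ for $(1,1,1,3)$ itself. (The six-vertex description of $\overline{P}$ is also not free: the convex hull of the lattice points of a triangle with non-lattice corners can in general acquire many vertices along a truncated corner, and ruling this out requires the corner-sliver argument of Lemma \ref{g-int-pts} in all its cases, not just the one drawn in Figure \ref{3d-polytope}. Since it is only used for the unnecessary sharpening, it is moot.) You are right, and it is worth stating explicitly, that the discrepancy is harmless downstream: the proof of the Main Theorem uses only that $n$ is bounded by some fixed function of $g$, together with Lemma \ref{finite-polygons}, so $n\leq 3g+7$ serves verbatim in place of $n\leq 3g+6$.
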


\begin{proof}
Let $(w_0,w_1,w_2,d)$ be a $g$-good quadruple, and let $\overline{P}$ be its associated polytope in $\Z^2$. $\overline{P}$ has exactly $g$ interior lattice points, where $g\geq 1$, therefore $n(P)\leq 3g+6$. \cite[4.5.6]{koelman}


\end{proof}

We say two convex polytopes $P_1,P_2\subset\Z^2$ are equivalent if there is an affine transformation $f:\Z^2\to \Z^2$ such that $f(P_1)=P_2$.

\begin{lemma}\cite[4.4]{koelman}\label{finite-polygons}
For a fixed $n\in\Z_+$, there are only finitely many classes of equivalent polytopes in $\Z^2$ with $n(P)=n$.
\end{lemma}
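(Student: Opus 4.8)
The plan is to show that every equivalence class of lattice polygons with $n(P)=n$ has a representative contained in one fixed bounded box, and then to observe that a fixed box contains only finitely many lattice polygons. The relevant equivalence group is the group of unimodular affine transformations $\mathrm{GL}_2(\Z)\ltimes\Z^2$: an affine map $f$ with $f(\Z^2)=\Z^2$ and $f^{-1}(\Z^2)=\Z^2$ necessarily has linear part in $\mathrm{GL}_2(\Z)$. Since $[0,N]^2\cap\Z^2$ is a finite set, it has only finitely many subsets, hence contains only finitely many convex lattice polygons; so it suffices to produce, for each class, a representative inside $[0,N]^2$ with $N=N(n)$ depending only on $n$.

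The first input is a bound on the area. Writing $I$ for the number of interior lattice points and $B$ for the number of boundary lattice points of $P$, Pick's theorem gives $\mathrm{Area}(P)=I+\tfrac{B}{2}-1$. Since $I+B=n$ we get $\mathrm{Area}(P)=n-\tfrac{B}{2}-1\le n$; in particular the area is bounded in terms of $n$ alone.

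The heart of the argument is to convert this area bound into a bound on a bounding box, using the lattice width. Let $w=w(P)=\min_{u\in\Z^2\setminus\{0\}}\big(\max_{x\in P}\langle u,x\rangle-\min_{x\in P}\langle u,x\rangle\big)$, attained in some primitive direction $u_1$. Completing $u_1$ to a unimodular basis and applying the corresponding element of $\mathrm{GL}_2(\Z)$ together with a translation, I may assume $P\subseteq\{0\le y\le w\}$ with the $y$-direction realizing the minimal width. I then apply a horizontal shear $(x,y)\mapsto(x+ky,y)$, $k\in\Z$ — which is unimodular and preserves the strip — chosen so that the horizontal extent $h$ of $P$ is as small as possible; note that $h$ equals $\min_{k}$ of the width of $P$ in direction $(1,k)$, so $h\ge w$. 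After a final translation, $P\subseteq[0,h]\times[0,w]$ and $P$ touches all four sides of this box. The key estimate is that in this normalized position the box cannot be much larger than $P$: one has $hw\le C\,\mathrm{Area}(P)$ for an absolute constant $C$. This is exactly where the minimality of $w$ is essential — a long, thin, \emph{slanted} polygon can sit in a huge box of tiny relative area, but only before normalization, and the choices of $u_1$ and of the optimal shear rule this out. Concretely, the inequality can be obtained by passing to the centrally symmetric difference body $\tfrac12(P-P)$, whose area is comparable to $\mathrm{Area}(P)$ by Rogers–Shephard and whose lattice width in the two normalized directions is $w$ and $h$, and then invoking the theory of successive minima (Minkowski's second theorem $\lambda_1\lambda_2\,\mathrm{Area}\le 4$). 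Granting this, $hw\le Cn$, and since $h\ge w\ge 1$ we obtain $w\le Cn$ and $h\le Cn$, so $P\subseteq[0,Cn]^2$.

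Combining the three steps, every class has a representative inside the fixed box $[0,Cn]^2$, and by the first paragraph this leaves only finitely many possibilities. I expect the normalization step to be the main obstacle: one must ensure the lattice width is attained and used as the strip direction, and that the shear is chosen to minimize the transverse extent, since only in this canonical position does the area control the entire bounding box. The remaining ingredients — Pick's theorem and the finiteness of lattice polygons inside a fixed box — are elementary.
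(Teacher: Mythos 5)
Your overall strategy is correct, but it is genuinely different from the paper's. The paper (following Koelman) argues by induction on $n$: the base case is the classification of polygons with no interior lattice points (the triangle $I_{n-2}$, the trapezoids $I_{k,l}$, and the triangle $(0,0),(2,0),(0,2)$), and the inductive step deletes a vertex $Q$ of $P$ to obtain a polygon $P'$ with $n(P')=n-1$, then observes that $Q$ can be re-attached in only finitely many ways up to equivalence. That route is elementary and enumerative (Koelman uses it to actually list the polygons). You instead produce, for each class, a representative in a fixed box $[0,Cn]^2$, via Pick's theorem ($\mathrm{Area}(P)\le n$; note Pick requires $P$ two-dimensional, but degenerate $P$, a segment with $n$ points, is trivially unique up to equivalence) together with a lattice-width normalization; this is the two-dimensional case of the Lagarias--Ziegler normalization argument. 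Your version is quantitative (an explicit box, hence an explicit bound on the number of classes), needs only an area bound rather than the exact lattice-point count, and generalizes to higher dimensions, at the price of heavier convex-geometry input.

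One step of your sketch needs repair as written: applying Minkowski's second theorem in the form $\lambda_1\lambda_2\,\mathrm{Area}\le 4$ to the difference body $K=\tfrac12(P-P)$ goes the wrong way. Since $K$ lies in an $h\times w$ box, one has $\lambda_1(K)\ge 2/h$ and $\lambda_2(K)\ge 2/w$, so that inequality yields $\mathrm{Area}(K)\le hw$ --- the reverse of your key estimate. The correct deduction applies Minkowski's second theorem to the \emph{polar} body $K^{\circ}$: since $\mathrm{width}_u(K)=2h_K(u)$ and $h_K$ is the gauge of $K^{\circ}$, the successive minima of $K^{\circ}$ are $w/2$ and $h'/2$, where $h'$ is the minimal width of $K$ over lattice directions independent of the width-minimizing one; a short reduction using subadditivity of $h_K$ (replace a minimizer $(m,k)$, $|m|\ge 2$, by $(1,k')$ with $|k-mk'|\le |m|/2$, giving $\mathrm{width}_{(1,k')}\le h'/|m|+w/2\le h'$) shows $h'=h$. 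Then $\mathrm{Area}(K^{\circ})\le 16/(wh)$, and Mahler's inequality $\mathrm{Area}(K)\,\mathrm{Area}(K^{\circ})\ge 8$ gives $\mathrm{Area}(K)\ge wh/2$, whence by Rogers--Shephard $\mathrm{Area}(P)\ge\tfrac{2}{3}\mathrm{Area}(K)\ge wh/3$, i.e. $hw\le 3n$ (any constant suffices). With this fix --- which needs Mahler's inequality, or equivalently a covering-minima argument, in addition to the two tools you name --- your proof is complete and correct.
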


\begin{proof}

Let $P$ be a polynomial in $\Z^2$ containing $n$ lattice points. If $g(P)=0$, then $P$ is equivalent to one of following: a triangle $I_{n-2}$ with vertices $(0,0),(n-2,0),(0,1)$, a polytope $I_{k,l}$ with vertices $(0,0),(k,0),(l,1),(0,1)$, where $k\geq l\geq 1$ and $k+l=n-2$, or the triangle $I$ with vertices $(0,0),(2,0),(0,2)$ (only in the special case when $n=6$).

If $g\geq 1$, in particular $n\geq 4$. Pick $Q$ any vertex of $P$ and let $P'$ be the polytope obtained by deleting $Q$ from $P$. Then $n(P')=n-1$. By induction, if we know there are finitely many $P'$ with $n(P')=n-1$, then for all such $P'$ we determine the points $Q$ which are at height 1 with respect to some vertex, thus obtaining a finite number of equivalent classes of polytopes with $n(P)=n$.
\end{proof}

We are now ready to prove the statement of the main theorem, and show there are only finitely many closed loci  of weighted plane curves $Z_Q\subset M_g$, as $Q$ varies over all $g$-good quadruples $(w_0,w_1,w_2,d)$.

\begin{proof}[Proof of Main Theorem]
Fix a genus $g\geq 1$. By Lemma \ref{unif-bound}, every $g$-good quadruple has an associated polytope in $\Z^2$ with at most $3g+6$ lattice points. By Lemma \ref{finite-polygons}, we know there are only finitely many classes of such polytopes, up to the affine equivalence defined above. We will show below that two equivalent polytopes in $\Z^2$ correspond to the same locus $Z_Q$ in the moduli space $M_g$.

Let $Q=(w_0,w_1,w_2,d)$ and $Q'=(w'_0,w'_1,w'_2,d')$ be two $g$-good quadruples whose corresponding polytopes $\overline{P}$ and $\overline{P'}$ in $\Z^2$ are equivalent, i.e. there exists $A\in SL_2(\Z)$ such that the function $f:\Z^2\to\Z^2$ which maps $(a,b)\to (a,b)\cdot A$ satisfies $f(\overline{P})=\overline{P'}$.

The existence of $f$ implies that $n(\overline{P})\leq n(\overline{P'})$; since $f^{-1}$ corresponds to the matrix $A^{-1}\in SL_2(\Z)$, then $n(\overline{P})=n(\overline{P'})=n$. Thus the polytopes $P$ and $P'$ have associated matrices $M(P)$ and $M(P')$ of the same dimension. We claim that $M(P)$ and $M(P')$ are equivalent up to a linear transformation, i.e., there exists $T\in GL_3(\Q)$ such that
$$M(P)\cdot T=M(P').$$
To see this, recall that we have a map $i_P:P\to\overline{P}$ which takes every vector
$$v=\alpha_1v_1+\alpha_2v_2+\alpha_3v_3\xrightarrow{i_P} (\alpha_1,\alpha_2),$$
whose inverse is given by
$$i_P^{-1}:\overline{P}\to P,\text{    }(\alpha_1,\alpha_2)\to \alpha_1v_1+\alpha_2v_2+(1-\alpha_1-\alpha_2)v_3.$$



Now, $T$ has entries rational numbers whose denominators are all divisors of $d$. To see this, let $A$ be a $3\times 3$ minor in $M(P)$ with determinant $\pm d$, and let $B=f(A)=A\cdot T$. The transformation $T$ must map $A$ to a minor with the smallest nonzero determinant in $M(P')$, so $\det(B)=d'$. Now $T=BA^{-1}$; $A^{-1}$ has rational entries whose denominators are all divisors of $\det(A)=\pm d$, and $B$ has integer entries, so the conclusion follows.

The matrix $T$ induces a mapping
\begin{align*}
x_0\to &(y_0)^{t_{11}}(y_1)^{t_{12}}(y_2)^{t_{13}}\\
x_1\to&(y_0)^{t_{21}}(y_1)^{t_{22}}(y_2)^{t_{23}}\\
x_2\to &(y_0)^{t_{31}}(y_1)^{t_{32}}(y_2)^{t_{33}},
\end{align*}
which is not a ring map. To fix this, we look instead at the graded subalgebra $\C[x_0,x_1,x_2]^{(d)}$ generated by monomials of degree divisible by $d$. Similarly, we consider the subalgebra $\C[y_0,y_1,y_2]^{(d')}$ generated by monomials of degree divisible by $d'$. The generators in degree 1 of $\C[x_0,x_1,x_2]^{(d)}$ are monomials $x_0^ax_1^bx_2^c$ such that $aw_0+bw_1+cw_2=d$; these monomials correspond to the row vectors in $M(P)$. Similarly, the generators in degree 1 of $\C[y_0,y_1,y_2]^{(d')}$ correspond to the row vectors in $M(P')$. Let $S_1$ be the subalgebra of $\C[x_0,x_1,x_2]^{(d)}$ generated by the monomials of degree 1, and $S'_1$ be the subalgebra of $\C[y_0,y_1,y_2]^{(d')}$ generated by the monomials of degree 1. In terms of varieties, we get the following diagram:

$$\begin{tikzcd}
\Proj(S_{1}) \arrow{d}{\cong} & (\PP^2(W))^{(d)}=\Proj(\C[x_0,x_1,x_2]^{(d)})\arrow[dashed]{l}{bir} &\PP^2(W)=\Proj (\C[x_0,x_1,x_2])\arrow{l}{\cong}\arrow[dashed]{d}{bir} \\
\Proj(S'_{1}) & (\PP^2(W'))^{(d')}=\Proj(\C[y_0,y_1,y_2]^{(d')})\arrow[dashed]{l}{bir} & \PP^2(W')=\Proj (\C[y_0,y_1,y_2])\arrow{l}{\cong}
\end{tikzcd}$$

The isomorphisms $(\PP^2(W))^{(d)}\cong \PP^2(W)$ and $(\PP^2(W'))^{(d')}\cong \PP^2(W')$ are elementary. The left isomorphism $\Proj(S_{1})\cong\Proj(S'_{1})$ is given by $T$: on the level of rings, every generator $x_0^ax_1^bx_2^c$ of $S_{1}$ is mapped to $y_0^{a'}y_1^{b'}y_2^{c'}$, where $(a,b,c)\cdot T=(a', b', c')$. Since $M(P)\cdot T=M(P')$, this mapping gives an isomorphism of projective varieties.

The inclusion of rings $S_1\hookrightarrow \C[x_0,x_1,x_2]^{(d)}$ induces a rational map 
$$\Proj S_1\dashrightarrow \Proj (\C[x_0,x_1,x_2])^{(d)}.$$

We claim this map is a birational morphism, defined on the dense open set $U=\{x_i\neq 0\}_{i=0,1,2}$. We prove this by showing that the fraction fields of $\C[x_0,x_1,x_2]^{(d)}$ and $S_1$ are the same.


Consider a monomial $x_0^ax_1^bx_2^c$ of degree $kd$, where $k\geq 1$. Let $v_1,v_2,v_3$ be again three vectors in $M(P)$ with corresponding minor $\pm d$. There exist unique $\alpha_i\in\R$ such that $(a,b,c)=\alpha_1v_1+\alpha_2v_2+\alpha_3v_3$. We claim that $\alpha_i\in\Z$. By symmetry, it suffices to show $\alpha_3\in \Z$.
We have 
$$\det\begin{pmatrix}
-&&v_1&&-\\
-&&v_2&&-\\
a&&b&&c\end{pmatrix}=\alpha_3d.$$
Additionally, since $aw_0+bw_1+cw_2=kd$, we can use linear transformations to obtain
$$w_0(\alpha_3d)= w_0\cdot \det\begin{pmatrix}
-&&v_1&&-\\
-&&v_2&&-\\
a&&b&&c\end{pmatrix}=\dots
=\det\begin{pmatrix}
d&&*&&*\\
d&&*&&*\\
kd&&b&&c\end{pmatrix}\in d\Z,$$
so $\alpha_3\in\frac{1}{w_0}\Z$. Similarly, we get that $\alpha_3\in\frac{1}{w_1}\Z$. Since $\gcd(w_0,w_1)=1$, it follows that $\alpha_3\in\Z$, as claimed.\\

Therefore, the monomial $x_0^ax_1^bx_2^c$ in $\C[x_0,x_1,x_2]_k^{(d)}$ can be expressed as a rational function of the three degree-$d$ monomials corresponding to $v_0,v_1,v_2$. This means that the function fields of $\C[x_0,x_1,x_2]^{(d)}$ and $S_1$ are isomorphic. This gives the desired birational map $\PP^2(W)^{(d)}\dasharrow \Proj(S_1)$, defined on the open set $\{x_1\neq 0\}_{i=0,1,2}$. We obtain the second birational morphism $\PP(W')^{(d)}\dasharrow \Proj(S^{\prime}_1)$ in the same fashion.\\




Now that we have defined a specific birational morphism from $\PP(^2W)\dasharrow\PP^2(W')$, defined on the open sets $U=\{x_i\neq 0\}_{i=0,1,2}\subset \PP^2(W)$ and $U'=\{y_i\neq 0\}_{i=0,1,2}\subset \PP^2(W')$, respectively, we want to see what are the images of smooth curves of genus $g$ under this rational map. Let $C_d=V(f)$ be a degree $d$-curve on $\PP^2(W)$, where $f(\underline{x})=\sum_i a_i\underline{x}^{v_i}$. On the level of rings, $f(\underline{x})$ is mapped to $f'(\underline{y})=\sum_i a_i\underline{y}^{v'_i}$, where $v_i\cdot T=v'_i$. Therefore, the closure of the image of $C_d=V(f)\subset \PP(W)$ is a new curve $C_{d'}=V(f')\subset \PP^2(W')$. 

Let $\PP V_Q$ parametrize the degree-$d$ curves on $\PP^2(W)$, and $\PP V_{Q'}$ parametrize the degree-$d'$ curves on $\PP^2(W')$. We have a map $\PP V_Q\xrightarrow{\cong}\PP V_{Q'}$ that sends $[C_d=V(f)]\to [C_{d'}=V(f')]$. Let $\mathcal{U}^{sm}_{Q}\subset \PP V_Q$ be the (nonempty) open dense set of smooth curves on $\PP^2(W)$ of degree $d$ (and genus $g$), and let $\mathcal{U}^{sm}_{Q'}\subset \PP V_{Q'}$ be the (nonempty) open dense set of smooth curves of degree $d$ (and genus $g$) on $\PP^2(W')$. The birational map $\PP^2(W)\dasharrow \PP^2(W')$ induces a birational morphism:

\[
\begin{tikzcd}
\mathcal{U}^{sm}_{Q} \arrow[dashed]{rr}{bir}\arrow{dr}{i}  && \mathcal{U}^{sm}_{Q'}\arrow{dl}{i'}\\
{} & M_g
\end{tikzcd}
\]
It follows immediately that $Z_Q=\overline{i(\mathcal{U}^{sm}_{Q})}=\overline{i'(\mathcal{U}^{sm}_{Q'})}=Z_{Q'}$, so the Main Theorem holds.

\end{proof}

\newpage

\end{document}